\newtheorem{lemma}{Lemma}
\newtheorem{theo}{Theorem}
\title {Asymptotic efficiency of M.L.E. using prior survey in multinomial distributions}
\author{Yo Sheena\thanks{Faculty of Economics, Shinshu University, Japan; Faculty of Data Science, Shiga University, Japan. }}
\date{April, 2019}
\begin{document}
\maketitle

\begin{abstract}
Incorporating information from a prior survey is generally supposed to decrease the estimation risk of the present survey. This paper aims to show how the risk changes by incorporating the information of a prior survey through watching the first and the second-order terms of the asymptotic expansion of the risk. We recognize that the prior information is of some help for risk reduction when we can acquire samples of a sufficient size for both surveys. Interestingly, when the sample size of the present survey is small, the use of the prior survey can increase the risk. In other words, blending information from both surveys can have a negative effect on the risk. Based on these observations, we give some suggestions on whether or not to use the results of the prior survey and the sample size to use in the surveys  for a reliable estimation.
\end{abstract}
\noindent
MSC(2010) \textit{Subject Classification}: Primary 62F10; Secondary 62F12\\
\textit{Key words and phrases:} Kullback--Leibler divergence, multinomial distribution, estimation risk, asymptotic expansion.
\section{Introduction}
Combining data has been an important topic in statistics for a long time. In 1950's,  some influential results on the topic like Cochran \cite{Cochran}, Cochran and Cox \cite{Cochran&Cox}, Good \cite{Good_1} were published. Later on, the scope and depth of  the research over the topic has developed and combining data becomes indispensable method in various fields.  Meta-analysis is now an important issue in many statistical fields not to mention its usefulness in medical research. In the field of survey sampling, the use of the auxiliary data, i.e. the combination of principle and other data, is a basic tool for efficient sampling (e.g. see Valliant et.al Chapter 14 of \cite{Valliant_et_all}). We also notice that combining data from existing multiple surveys has been a challenging task for econometrics (e.g. see Ridder and Moffitt \cite{Ridder&Moffitt}). 

In combining data, we have to pay attention to ``homogeneity'' between the data sets, which is defined in various ways according to the purpose of the research.  When the data sets are not homogeneous,  it is not easy to distill information unbiasedly from these data sets. Many elaborate statistical methods have been developed and also used in practice for this purpose. 

On the contrary, when the data sets are completely homogeneous, simple aggregation or pooling work well for condensing them. However it is noteworthy that though the estimation technique is quite simple, there still remains a fundamental question; How much more estimation efficiency is gained by using the information from another data set (survey) ? We try to answer this question in this paper under very simple setting; one sample from a multinomial distribution and the other sample from a multinomial distribution that is more coarse.

We consider the multinomial distribution model over the categories $C_i,\ i=1,\ldots,p+1,$ where the parameter (the probability for each category) is given by 
\begin{equation}
\label{para_multi_dist}
m \triangleq (m_1,\ldots,m_{p+1}).
\end{equation}
If there are no restrictions other than
\[
\sum_{i=1}^{p+1} m_i=1,\qquad m_i >0,\quad i=1,\ldots,p+1,
\]
we call the distribution model a ``full model'' (the dimension of the model equals $p$), while if some restrictions on the parameter reduce the model dimension to less than $p$, we call the model a ``submodel.''

Let $\widehat{m}\triangleq(\widehat{m}_1,\ldots,\widehat{m}_{p+1})$ be maximum likelihood estimator (MLE) of $m\triangleq (m_1,\ldots,m_{p+1})$. We evaluate the discrepancy between the predictive distribution given by MLE and the true distribution by using Kullback--Leibler divergence:
\[
\overset{-1}{D}[\widehat{m}:m]=\sum_{i=1}^{p+1} \widehat{m}_i \log \frac{\widehat{m}_i}{m_i}.
\]
Because Kullback--Leibler divergence is an ``$\alpha$-divergence'' with $\alpha=-1$, we will use the notation $\overset{-1}{D}$. (E.g., see Amari \cite{Amari4} for the $\alpha$-divergence.)
We evaluate the performance of MLE through its risk, that is, 
\begin{equation}
\label{def_ED^-1}
\overset{-1}{ED}=\overset{-1}{ED}[\widehat{m}: m]\triangleq E\bigl[\overset{-1}{D}[\widehat{m}: m]\bigr].
\end{equation}
It is difficult to obtain the explicit form of the risk. In this paper, alternatively, we derive the asymptotic expansion of the risk with respect to the sample size $n$ up to the second-order term. The first-order term ($n^{-1}$-order term) and the second-order term ($n^{-2}$-order term) give us important information on the asymptotic performance of MLE. 

For the full model, the asymptotic expansion with respect to the sample size $n$ is given by
\begin{equation}
\label{ED_-1_expan_full}
\overset{-1}{ED}=\frac{p}{2n}+\frac{1}{12n^2}(M-1)+o(n^{-2}),
\end{equation}
where 
\begin{equation}
M \triangleq \sum_{i=1}^{p+1} m^{-1}_i.
\end{equation}
(see (42) of  Sheena \cite{Sheena}).
The first-order term is determined by the ratio of the model's dimension to the sample size (``$p-n$ ratio''). Because this holds true for any parametric model, the first-order term for a submodel of $p'$-dimensions equals $p'/(2n)$. However, the second-order term of most submodels is too complicated to be derived explicitly. (See (25) of \cite{Sheena}. The asymptotic expansion of the risk with respect to $\alpha$-divergence for a general parametric distribution is given in Theorem 1 of  \cite{Sheena}.)

Suppose that random sampling is carried out from the multinomial distribution given by \eqref{para_multi_dist}. We will call this sampling the ``present'' survey. Additionally, suppose that there is another survey (sampling) related to the relative frequencies of the categories  $C_i$'s. We call this survey the ``prior'' survey. (Note that the words ``present'' and ``prior'' do not necessarily mean the chronological order.)

There are many possible kinds of relationship between the present and prior surveys. In this paper, we focus on the case when the prior survey is ``coarse" compared with the present survey, according to the following definition. Suppose that  the present and prior surveys are conducted over the categories $C_i, i=1,\ldots,p$ and $C^*_{j},\ j=1,\ldots,I$, respectively. If each $C_j^*$ is a union of some $C_{i}$'s, we say that the prior survey is coarse (compared with the present survey). 

To clarify the relationship of the categories between the two surveys, let us consider the present survey as a two-stage model. We use the two-stage notation $C_{ij},\ i=1,\ldots,I, j=1,\ldots,J_i$ for the categories of the present survey, where for $i=1,\ldots,I$, 
\[
C_{i\cdot} \triangleq \bigcup_{j=1,\ldots,J_i} C_{ij},\qquad  C_{i\cdot}=C^*_i.
\]
Consequently, the first index $i$ of $C_{ij}$ tells to which $C_{i\cdot}=C^*_i$ an observation belongs. The second index $j$ runs over $J_i$ categories that belong to $C_{i\cdot}$. Note that when $J_i=1$, the category $C_{i\cdot}$ consists of a single $C_{i1}$.

The definition of the two-stage multinomial distribution model is described as follows. Suppose that random variable $X$ takes values that belong to one of the categories $C_{ij}$ with the probabilities
\begin{equation}
m_{ij}\triangleq P(X \in C_{ij}),
\end{equation}
for  $i=1,\ldots,I, \ j=1,\ldots, J_i$. 

We define the related probabilities as
\begin{equation}
m_{i\cdot}\triangleq \sum_{j=1}^{J_i} m_{ij}, \qquad p_{ij}\triangleq \frac{m_{ij}}{m_{i\cdot}}
\end{equation}
for  $i=1,\ldots,I, \ j=1,\ldots, J_i$. The first-stage model is given by focusing on which $C_{i\cdot}$ the value of $X$ belongs to. Its parameters (probabilities for each category) are given by 
\[
m_f \triangleq (m_{1\cdot},\ldots,m_{I\cdot}).
\]
In this paper, we only consider the case where the first-stage model is a full model. This illustrates well how the information on the prior survey changes the estimation risk of the present survey.

The $i$-th model ($i=1,\ldots, I$) on the second stage is given by the categories $C_{ij}, j=1,\ldots, J_i$ and the corresponding probabilities 
\[
p_i\triangleq (p_{i1},\ldots,p_{iJ_i})
\]
under the condition $X \in C_{i\cdot}$. We use $s_i$ for the dimension of the $i$-th model, hence if $s_i=J_i-1$, it is a full model. Note that if the $i$-th second-stage model consists of a single category, i.e., $J_i=1$, $p_{i1}=1$, there are no unknown parameters for the model.

Correspondingly to the two-stage model, the two-stage estimator $\widehat{m}_{ij}$ of $m_{ij}$ is given by
\begin{equation}
\label{decomp_mle}
\widehat{m}_{ij}=\widehat{m}_{i\cdot}\widehat{p}_{ij},\qquad i=1,\ldots,I, \ j=1,\ldots, J_i,
\end{equation}
where $\widehat{m}_{i\cdot}$ is the estimator for the first-stage model and $\widehat{p}_{ij}, \  j=1,\ldots, J_i$ is the estimator for the $i$-th ($i=1,\ldots,I$) second-stage model. Note that for the $i$-th second-stage model with $J_i=1$, the estimator $\widehat{p}_{i1}\equiv 1$.

Let  $x_{ij}$ and $ x_{i\cdot}$  $(i=1,\ldots,I, j=1,\ldots,J_i)$ denote the number of observations in the present survey which belong to $C_{ij}$ and $C_{i\cdot}$, respectively. Hence,
\[
x_{i\cdot} = \sum_{j=1}^{J_i}x_{ij},\qquad \sum_{i=1}^{I}x_{i\cdot}=n \text{ (sample size)}.
\]
The prior survey, which is coarse, is only related to the first-stage model. Let $x_i^*$ denote the number of observations in the prior survey that belong to $C_{i\cdot}$ ($i=1,\ldots,I)$. The sample size of the prior survey is denoted by $n^*$. The notations of $X_{ij}, X_{i \cdot}, X^*_i$ are similarly defined as the corresponding random variables. 

With two sets of samples $x_{i\cdot},\ i=1,\ldots,I$ and $x_i^*,\ i=1,\ldots,I$, there are three possible MLEs for the first-stage parameter $m=(m_{1\cdot},\ldots,m_{I\cdot})$. (We use capital letters for the corresponding random variables.)
\begin{enumerate}
\item MLE from the present sample: 
\[
\widehat{m}_f \triangleq (\widehat{m}_{1\cdot},\ldots,\widehat{m}_{I\cdot}),\qquad
\widehat{m}_{i\cdot}=X_{i\cdot}/n,\ i=1,\ldots,I.
\]
\item MLE from the prior sample: 
\[
\widehat{m}_f^* \triangleq (\widehat{m}^*_{1\cdot},\ldots,\widehat{m}^*_{I\cdot}),\qquad
\widehat{m}^*_{i\cdot}=X^*_i/n^*,\ i=1,\ldots,I.
\]
\item MLE from the pooled sample: 
\[
\widehat{m}_f^p \triangleq (\widehat{m}^p_{1\cdot},\ldots,\widehat{m}^p_{I\cdot}), \qquad \widehat{m}^p_{i\cdot} \triangleq \frac{X_{i\cdot}+X_{i}^*}{n+n^*}.
\]
\end{enumerate}
MLE for the $i$-th second-stage model ($i=1,\ldots, I$) based on the present sample is given by
\[
\widehat{p}_{ij} \triangleq X_{ij}/X_{i \cdot},\quad j=1,\ldots, J_i.
\]
If we combine this estimator with each estimator for the first-stage model, 
we have three estimators for $m_{ij} (i=1,\ldots,I, j=1,\ldots J_i)$:
\begin{align}
\label{original_est_first_stage}
\widehat{m}_{ij}&\triangleq \widehat{m}_{i\cdot} \widehat{p}_{ij},\quad\widehat{m}\triangleq (\widehat{m}_{ij})_{1\leq i \leq I, 1\leq j \leq J_i}\\
\label{altern_est_first_stage}
\widehat{m}^*_{ij}&\triangleq \widehat{m}^*_{i\cdot} \widehat{p}_{ij}, \quad\widehat{m}^*\triangleq (\widehat{m}^*_{ij})_{1\leq i \leq I, 1\leq j \leq J_i}\\
\label{pool_est_first_stage}
\widehat{m}^p_{ij}&\triangleq \widehat{m}^p_{i\cdot} \widehat{p}_{ij}, \quad \widehat{m}^p\triangleq (\widehat{m}^p_{ij})_{1\leq i \leq I, 1\leq j \leq J_i}.
\end{align}

This study aims to compare the risks of these three estimators. Asymptotic expansion of the risk for the three estimators is given in Section \ref{subsec: theo_result}. Comparison of the first and second-order terms give us some insight on the asymptotic behavior of the estimators. In Section \ref{subsec:example}, we observe the performance of the estimators through three examples. The findings from our simulation studies supplement the theoretical results.
%
%
%
%
%
%
%
%


%
%
%
%
%
%
%
%
\section{Main results}
\label{sec:main_result}

%
%
%
%
%
\subsection{Theoretical Result}
\label{subsec: theo_result}
In this section, we derive an asymptotic expansion of the risk of three estimators $\widehat{m}$, $\widehat{m}^*$, and $\widehat{m}^p$. We use the following decomposition of Kullback--Leibler divergence $\overset{-1}{D}[\widehat{m}: m]$. Let
\[
m\triangleq (m_{ij})_{1\leq i \leq I, 1\leq j \leq J_i},\qquad \widehat{
m}\triangleq (\widehat{m}_{ij})_{1\leq i \leq I, 1\leq j \leq J_i},
\]
then from \eqref{decomp_mle}, we have
\begin{align}
\overset{-1}{D}[\widehat{m}: m]&=\sum_{i=1}^I \sum_{j=1}^{J_i} \widehat{m}_{ij} \log (\widehat{m}_{ij}/m_{ij})\nonumber\\
&=\sum_{i=1}^I \sum_{j=1}^{J_i} \widehat{m}_{i\cdot}\widehat{p}_{ij} \log \frac{\widehat{m}_{i\cdot}\widehat{p}_{ij}}{m_{i\cdot}p_{ij}}\nonumber\\
&=\sum_{i=1}^I \widehat{m}_{i\cdot} \log (\widehat{m}_{i\cdot}/m_{i\cdot}) +\sum_{i=1}^I \widehat{m}_{i\cdot}\sum_{j=1}^{J_i}\widehat{p}_{ij} \log(\widehat{p}_{ij}/p_{ij})\nonumber\\
&=\overset{-1}{D}[\widehat{m}_f: m_f] + \sum_{i=1}^I \widehat{m}_{i\cdot} \:\overset{-1}{D}[\widehat{p}_i:p_i], \label{chain_rule_-1}
\end{align}
where 
\[
\widehat{m}_f= (\widehat{m}_{1\cdot},\ldots\widehat{m}_{I\cdot}),\qquad \widehat{p}_i\triangleq (\widehat{p}_{i1},\ldots,\widehat{p}_{i J_i}),\quad 1\leq i \leq I.
\]
This decomposition is called the ``chain rule" and is one of the appealing characteristics of the Kullback--Leibler divergence.

As for the estimation of $p_{ij}$, the $i$-th second-stage model parameter, we almost surely encounter the case where $X_{i \cdot}=0$. In this case, MLE (or any estimator) does not work, because there is no sample from the $i$-th model. We make it a rule to discard such samples with no estimation. In the following theorems, we adopt this rule, hence expectations taken are all conditional on the state $X_{i \cdot} \ne 0,\ 1 \leq \forall i \leq I$. However, as Lemma \ref{expectations_difference} in Appendix shows, the conditional expectations of Kullback--Leibler divergence and  $(X_{i\cdot}-n m_{i \cdot})^k (k=1,2,\ldots)$ differ from those unconditional by $o(n^{-s})$ for any $s >0$. Therefore, we use the same notation as that for the unconditional distribution.

The risk for each estimator is given in the following three theorems. We use the notation $A_{(i)}, i=1,\ldots,I$ for the coefficient of $1/(24n^2)$ in the asymptotic expansion of the risk of MLE for the $i$-th second-stage model when the sample size is given as $n$. Because the dimension of the $i$-th model equals $s_i$, the asymptotic expansion of the risk of MLE with respect to $n$ is given by
\begin{equation}
\label{asy_expan_ith_sub}
\overset{-1}{ED}[\widehat{p}_i:p_i]\triangleq E\bigl[\overset{-1}{D}[\widehat{p}_i:p_i]\bigr]=\frac{s_i}{2n}+\frac{1}{24n^2} A_{(i)}+o(n^{-2}).
\end{equation}
(See (25) of \cite{Sheena}.)
%
%
%
%
\begin{theo} If we use the estimator \eqref{original_est_first_stage}, then the risk is given by
\label{ED_-1_2stage}
\begin{equation}
\begin{aligned}
\overset{-1}{ED}[\widehat{m}:m]&= \frac{p'}{2n}+\frac{1}{24n^2}\Bigl\{\sum_{i=1}^I m_{i\cdot}^{-1}\Bigl(\overset{-1}{A}_{(i)}+2\Bigr)-2\Bigr\}+o(n^{-2})\\
&=\frac{1}{2n}\Bigl(I-1+\sum_{i=1}^I s_i\Bigr)+\frac{1}{12n^2}(M_f-1)+\frac{1}{24n^2}\Bigl(\sum_{i=1}^I m_{i\cdot}^{-1}\overset{-1}{A}_{(i)}\Bigr)+o(n^{-2}),
\label{presetn_first_stage}
\end{aligned}
\end{equation}
where 
\[
p'\triangleq I-1+\sum_{i=1}^I s_i, \quad M_f\triangleq \sum_{i=1}^I m_{i\cdot}^{-1}
\]
In the special case, when the second-stage models are all full models, the risk is given by
\begin{equation}
\label{presetn_first_stage_2nd_model_full}
\overset{-1}{ED}[\widehat{m}:m]=\frac{p}{2n}+\frac{1}{12n^2}\Bigl\{\sum_{i=1}^I \sum_{j=1}^{J_i} m_{ij}^{-1}-1\Bigr\}+o(n^{-2}),
\end{equation}
where
\[
p\triangleq \sum_{i=1}^I J_i -1.
\]
\end{theo}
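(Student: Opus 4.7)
\medskip\noindent\textbf{Proof proposal.}
The plan is to take the expectation of the chain rule decomposition (\ref{chain_rule_-1}), handle the two resulting pieces separately, and then specialize. Writing
\[
\overset{-1}{ED}[\widehat{m}:m]=\overset{-1}{ED}[\widehat{m}_f:m_f]+\sum_{i=1}^I E\bigl[\widehat{m}_{i\cdot}\,\overset{-1}{D}[\widehat{p}_i:p_i]\bigr],
\]
the first summand is the risk of MLE for an $I$-category full multinomial of dimension $I-1$, so (\ref{ED_-1_expan_full}) applied with $p\mapsto I-1$ and $M\mapsto M_f$ gives
\[
\overset{-1}{ED}[\widehat{m}_f:m_f]=\frac{I-1}{2n}+\frac{1}{12n^{2}}(M_f-1)+o(n^{-2}).
\]

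For each term in the sum I would condition on $X_{i\cdot}$. Conditionally on $X_{i\cdot}=x\ge 1$, the estimator $\widehat{p}_i$ is the MLE based on an i.i.d.\ sample of size $x$ from the $i$-th second-stage model, so (\ref{asy_expan_ith_sub}) yields
\[
E\bigl[\overset{-1}{D}[\widehat{p}_i:p_i]\,\bigm|\,X_{i\cdot}=x\bigr]=\frac{s_i}{2x}+\frac{\overset{-1}{A}_{(i)}}{24x^{2}}+o(x^{-2}).
\]
Multiplying by $\widehat{m}_{i\cdot}=X_{i\cdot}/n$ and taking expectation, the leading factor $X_{i\cdot}$ cancels the $1/x$, and I obtain
\[
E\bigl[\widehat{m}_{i\cdot}\,\overset{-1}{D}[\widehat{p}_i:p_i]\bigr]=\frac{s_i}{2n}+\frac{\overset{-1}{A}_{(i)}}{24n}\,E\!\left[\frac{1}{X_{i\cdot}}\right]+o(n^{-2}).
\]
Since $X_{i\cdot}\sim\mathrm{Bin}(n,m_{i\cdot})$, a standard Taylor expansion around $nm_{i\cdot}$ gives $E[X_{i\cdot}^{-1}]=(n m_{i\cdot})^{-1}+O(n^{-2})$; the contribution from atypically small $X_{i\cdot}$ is negligible by the $o(n^{-s})$ bound in Lemma \ref{expectations_difference} (Appendix), which also justifies why conditioning on $X_{i\cdot}\ge 1$ does not disturb the expansion. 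Summing over $i$ produces $\sum_i s_i/(2n)$ at first order and $\sum_i m_{i\cdot}^{-1}\overset{-1}{A}_{(i)}/(24n^{2})$ at second order; adding the $\widehat{m}_f$ contribution and rewriting $\frac{1}{12n^{2}}(M_f-1)=\frac{1}{24n^{2}}\bigl(2\sum_i m_{i\cdot}^{-1}-2\bigr)$ collects exactly to the first displayed formula of the theorem.

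For the special case in which every second-stage model is full, $s_i=J_i-1$, so $I-1+\sum_i s_i=\sum_i J_i-1=p$, handling the first-order term. For the second-order coefficient, (\ref{ED_-1_expan_full}) applied to each full second-stage model identifies $\overset{-1}{A}_{(i)}=2\bigl(\sum_{j=1}^{J_i} p_{ij}^{-1}-1\bigr)$, hence
\[
\sum_{i=1}^I m_{i\cdot}^{-1}\bigl(\overset{-1}{A}_{(i)}+2\bigr)-2=2\sum_{i=1}^{I}m_{i\cdot}^{-1}\sum_{j=1}^{J_i}p_{ij}^{-1}-2=2\!\left(\sum_{i=1}^I\sum_{j=1}^{J_i}m_{ij}^{-1}-1\right)\!,
\]
using $m_{i\cdot}^{-1}p_{ij}^{-1}=m_{ij}^{-1}$. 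Dividing by $24n^{2}$ yields (\ref{presetn_first_stage_2nd_model_full}).

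The only delicate step is the interchange of the asymptotic expansion in $x$ with the expectation over $X_{i\cdot}$: the remainder $o(x^{-2})$ must be shown to integrate against the mass on small values of $X_{i\cdot}$ without spoiling the $o(n^{-2})$ control. This is the point where Lemma \ref{expectations_difference} does the heavy lifting, by ensuring that the event $\{X_{i\cdot}=0\}$ (and more generally $\{X_{i\cdot}\le \varepsilon n\}$) contributes only super-polynomially small mass, so the expansion survives integration. Everything else reduces to bookkeeping with (\ref{ED_-1_expan_full}), (\ref{asy_expan_ith_sub}) and moment expansions of $X_{i\cdot}^{-1}$.
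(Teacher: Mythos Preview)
Your proposal is correct and follows essentially the same route as the paper: expectation of the chain rule (\ref{chain_rule_-1}), application of (\ref{ED_-1_expan_full}) to the first-stage piece, conditioning on $X_{i\cdot}$ together with (\ref{asy_expan_ith_sub}) and a Taylor expansion of $E[X_{i\cdot}^{-1}]$ for the second-stage pieces, and the same algebra for the full-model special case. The only cosmetic difference is that the paper handles the remainder $o(n^{-2})\big|_{n=X_{i\cdot}}$ via Lemma~\ref{converge->p_converge} (turning it into $o_p(n^{-2})$ and then asserting $E[o_p(n^{-2})]=o(n^{-2})$), whereas you point to Lemma~\ref{expectations_difference}; note that Lemma~\ref{expectations_difference} strictly speaking only controls the event $\{X_{i\cdot}=0\}$, not $\{X_{i\cdot}\le\varepsilon n\}$, so the paper's use of Lemma~\ref{converge->p_converge} is the more accurate citation for that step.
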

\begin{proof}
From \eqref{chain_rule_-1}, we have
\begin{align}
&\overset{-1}{ED}[\widehat{m}:m]\nonumber\\
&\triangleq E[\overset{-1}{D}[\widehat{m}:m]]\nonumber\\
&=E_{X_{1\cdot},\cdots,X_{I\cdot}} \bigl[E[\overset{-1}{D}[\widehat{m}:m] | X_{1\cdot},\ldots,X_{I\cdot}] \bigr]\nonumber\\
&= E_{X_{1\cdot},\cdots,X_{I\cdot}} \Bigl[E\bigl[\overset{-1}{D}[\widehat{m}_f: m_f] + \sum_{i=1}^I (X_{i\cdot}/n) \:\overset{-1}{D}[\widehat{p}_i:p_i] \big| X_{1\cdot},\ldots,X_{I\cdot}\bigr] \Bigr]\nonumber\\
&=\overset{-1}{ED_f}+\sum_{i=1}^I  E_{X_{1\cdot},\cdots,X_{I\cdot}} [ (X_{i\cdot}/n) \:\overset{-1}{ED}_{(i)}(X_{i\cdot}) ],
\label{decomp_KL_two_stage}
\end{align}
where
\begin{align*}
\overset{-1}{ED_f}&\triangleq E[\overset{-1}{D}[\widehat{m}_f:m_f]],\\
\overset{-1}{ED}_{(i)}(X_{i\cdot})&\triangleq  E[\overset{-1}{D}[\widehat{p}_{i}:p_{i}]]\big|_{n=X_{i\cdot}}.
\end{align*}
Note that in general, $E[\overset{-1}{D}[\widehat{p}_{i}:p_{i}]]$ is the function of the size $n$ of the sample on which $\widehat{p}_{i}$ are based. Here, in the two-stage model, size $n$ equals to $X_{i\cdot}$.

Because
\[
\overset{-1}{ED}_{(i)}(X_{i\cdot})=\frac{s_i}{2X_{i\cdot}}+\frac{1}{24X_{i\cdot}^2}\overset{-1}{A}_{(i)}+o(n^{-2})\big|_{n=X_{i\cdot}}
\]
we have
\begin{align*}
&E_{X_{1\cdot},\cdots,X_{I\cdot}} [ (X_{i\cdot}/n) \:\overset{-1}{ED}_{(i)}(X_{i\cdot}) ]\\
&=\frac{s_i}{2n}+\frac{1}{24n}\overset{-1}{A}_{(i)}E[X_{i\cdot}^{-1}]+E[o_p(n^{-2})].
\end{align*}
For the derivation of the relation
\[
o(n^{-2})\big|_{n=X_{i\cdot}}=o_p(n^{-2}),
\]
we used the Lemma \ref{converge->p_converge}  in Appendix and the fact $X_{i \cdot}/n$ converges to $m_{i\cdot}$ in probability. Taylor expansion of $1/X_{i \cdot}$ around $1/(nm_{i\cdot})$ is given by
\[
\frac{1}{X_{i \cdot}}=\frac{1}{nm_{i\cdot}}-\frac{1}{(nm_{i\cdot})^2}(X_{i \cdot}-nm_{i\cdot})+O_p(n^{-2}).
\]
Because $E[X_{i \cdot}-nm_{i\cdot}]=0$, we have 
\[
E[X_{i \cdot}^{-1}]=\frac{1}{nm_{i\cdot}}+O(n^{-2}).
\]
and 
\begin{equation}
\label{part_result_1_Th_2}
E_{X_{1\cdot},\cdots,X_{I\cdot}} [ (X_{i\cdot}/n) \:\overset{-1}{ED}_{(i)}(X_{i\cdot}) ]=\frac
{s_i}{2n}+\frac{1}{24n^2}m_{i\cdot}^{-1}\overset{-1}{A}_{(i)}+o(n^{-2}).
\end{equation}
(We omit the proof that $E[O_p(n^{-2})]=O(n^{-2})$ and $E[o_p(n^{-2})]=o(n^{-2})$ hold for the present case.) 

Consequently, 
\begin{equation}
\label{last_but_one_Theo1}
\overset{-1}{ED}[\widehat{m}:m]=\overset{-1}{ED_f}+\sum_{i=1}^I \Bigl(\frac
{s_i}{2n}+\frac{1}{24n^2}m_{i\cdot}^{-1}\overset{-1}{A}_{(i)}\Bigr)+o(n^{-2}).
\end{equation}
If we apply \eqref{ED_-1_expan_full} to $\overset{-1}{ED_f}$, we obtain 
\[
\overset{-1}{ED_f}=\frac{I-1}{2n}+\frac{1}{12n^2}(M_f-1)+o(n^{-2}).
\]
If we insert this result into \eqref{last_but_one_Theo1}, we have \eqref{presetn_first_stage}.

When the second-stage models are all full models,  $s_i= J_i-1,\ i=1,\ldots,I$. From \eqref{ED_-1_expan_full}, it turns out that 
\[
\sum_{i=1}^I m^{-1}_{i\cdot}\overset{-1}{A}_{(i)}=\sum_{i=1}^I  m^{-1}_{i\cdot} \Bigl(\sum_{j=1}^{J_i}2p_{ij}^{-1}-2\Bigr)
=2 \sum_{i=1}^I  \sum_{j=1}^{J_i} m_{ij}^{-1}-2M_f.
\]
If we insert these results into \eqref{presetn_first_stage}, we have \eqref{presetn_first_stage_2nd_model_full}. (Because the total model is a full model in this case,
we notice that \eqref{presetn_first_stage_2nd_model_full} could be derived from \eqref{ED_-1_expan_full} straightforwardly.)
\end{proof}
%
%
%
%
\begin{theo}
\label{theorem:ED_-1_eval_prior}
If we use the estimator \eqref{altern_est_first_stage}, then the risk is given by
\begin{equation}
\label{ED_-1_eval_prior}
\begin{aligned}
\overset{-1}{ED}[\widehat{m}^*:m]&= \frac{1}{2n^*}(I-1)+\frac{1}{2n}\sum_{i=1}^I s_i+
\frac{1}{12{n^*}^2}(M_f-1)\\
&\qquad+\frac{1}{24n^2}\sum_{i=1}^I m_{i\cdot}^{-1}\Bigl(\overset{-1}{A}_{(i)}+12(1-m_{i\cdot})s_i\Bigr)+o(n^{-2})+o({n^*}^{-2}). 
\end{aligned}
\end{equation}
In the special case, when the second-stage models are all full models, the risk is given by
\begin{equation}
\label{ED_-1_eval_prior_2nd_model_full}
\begin{aligned}
\overset{-1}{ED}[\widehat{m}^*:m]&= \frac{1}{2n^*}(I-1)+\frac{1}{2n}(p+1-I)\\
&\qquad+
\frac{1}{12{n^*}^2}(M_f-1)+
\frac{1}{12n^2}\Bigl(M+\sum_{i=1}^I m_{i\cdot}^{-1}(6J_i-7)-6(p+1-I)\Bigr)\\&
\qquad+o(n^{-2})+o({n^*}^{-2}). 
\end{aligned}
\end{equation}
\end{theo}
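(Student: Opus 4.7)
The plan is to mirror the proof of Theorem~\ref{ED_-1_2stage}. Because $\widehat{m}^*_{ij}=\widehat{m}^*_{i\cdot}\widehat{p}_{ij}$ has the same product form as \eqref{decomp_mle}, the chain-rule calculation leading to \eqref{chain_rule_-1} carries over verbatim and yields
\[
\overset{-1}{D}[\widehat{m}^*:m]=\overset{-1}{D}[\widehat{m}^*_f:m_f]+\sum_{i=1}^{I}\widehat{m}^*_{i\cdot}\,\overset{-1}{D}[\widehat{p}_i:p_i].
\]
Taking expectations, the first term is the risk of MLE for the first-stage full model based on the prior sample of size $n^*$, so \eqref{ED_-1_expan_full} immediately gives $(I-1)/(2n^*)+(M_f-1)/(12{n^*}^2)+o({n^*}^{-2})$, which accounts for the $n^*$-terms in \eqref{ED_-1_eval_prior}.

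For each summand, the crucial structural difference from Theorem~\ref{ED_-1_2stage} is that the prior and present surveys are independent; hence $\widehat{m}^*_{i\cdot}$ is independent of $(X_{i\cdot},\widehat{p}_i)$ and $E[\widehat{m}^*_{i\cdot}]=m_{i\cdot}$. Consequently
\[
E\bigl[\widehat{m}^*_{i\cdot}\,\overset{-1}{D}[\widehat{p}_i:p_i]\bigr]=m_{i\cdot}\,E[\overset{-1}{ED}_{(i)}(X_{i\cdot})]=m_{i\cdot}\Bigl\{\tfrac{s_i}{2}E[X_{i\cdot}^{-1}]+\tfrac{\overset{-1}{A}_{(i)}}{24}E[X_{i\cdot}^{-2}]\Bigr\}+o(n^{-2}).
\]
In Theorem~\ref{ED_-1_2stage} the random weight $X_{i\cdot}/n$ absorbed the factor $X_{i\cdot}^{-1}$ inside $s_i/(2X_{i\cdot})$, so only the leading approximation $E[X_{i\cdot}^{-1}]=(nm_{i\cdot})^{-1}+O(n^{-2})$ used in \eqref{last_but_one_Theo1} was required. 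With the deterministic weight $m_{i\cdot}$ this cancellation is lost, and expanding $E[X_{i\cdot}^{-1}]$ one order further is the main technical obstacle.

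That obstacle is surmounted by the next term of the Taylor expansion,
\[
\frac{1}{X_{i\cdot}}=\frac{1}{nm_{i\cdot}}-\frac{X_{i\cdot}-nm_{i\cdot}}{(nm_{i\cdot})^2}+\frac{(X_{i\cdot}-nm_{i\cdot})^2}{(nm_{i\cdot})^3}+O_p(n^{-5/2}),
\]
combined with $E[X_{i\cdot}-nm_{i\cdot}]=0$ and $\mathrm{Var}(X_{i\cdot})=nm_{i\cdot}(1-m_{i\cdot})$, to give $E[X_{i\cdot}^{-1}]=1/(nm_{i\cdot})+(1-m_{i\cdot})/(n^2 m_{i\cdot}^2)+O(n^{-3})$; the cruder $E[X_{i\cdot}^{-2}]=1/(n^2 m_{i\cdot}^2)+O(n^{-3})$ suffices for the $\overset{-1}{A}_{(i)}$ term. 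Substituting and summing in $i$ gives
\[
\sum_{i=1}^{I}E\bigl[\widehat{m}^*_{i\cdot}\,\overset{-1}{D}[\widehat{p}_i:p_i]\bigr]=\frac{1}{2n}\sum_{i=1}^{I}s_i+\frac{1}{24n^2}\sum_{i=1}^{I}m_{i\cdot}^{-1}\bigl(\overset{-1}{A}_{(i)}+12(1-m_{i\cdot})s_i\bigr)+o(n^{-2}),
\]
which assembled with the prior-sample contribution proves \eqref{ED_-1_eval_prior}.

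For \eqref{ED_-1_eval_prior_2nd_model_full} I would specialise to $s_i=J_i-1$ and reuse the identity $\sum_i m_{i\cdot}^{-1}\overset{-1}{A}_{(i)}=2M-2M_f$ derived at the end of the proof of Theorem~\ref{ED_-1_2stage}, together with $\sum_i 12(1-m_{i\cdot})(J_i-1)m_{i\cdot}^{-1}=12\sum_i J_i m_{i\cdot}^{-1}-12M_f-12(p+1-I)$; collecting like terms yields \eqref{ED_-1_eval_prior_2nd_model_full}. The only non-routine points in the argument are the one-order-sharper expansion of $E[X_{i\cdot}^{-1}]$ and the verification that the $O_p(n^{-5/2})$ Taylor remainder contributes $o(n^{-2})$ after expectation, both of the same flavour as the tacit moment bounds invoked in Theorem~\ref{ED_-1_2stage}.
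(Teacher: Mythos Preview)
Your proposal is correct and follows essentially the same route as the paper: the chain-rule decomposition, independence of the prior and present samples so that $E[\widehat{m}^*_{i\cdot}]=m_{i\cdot}$, and the one-order-sharper Taylor expansion of $E[X_{i\cdot}^{-1}]$ using $\mathrm{Var}(X_{i\cdot})=nm_{i\cdot}(1-m_{i\cdot})$ are exactly the ingredients the paper uses, and your derivation of the full-model specialisation via $\sum_i m_{i\cdot}^{-1}\overset{-1}{A}_{(i)}=2M-2M_f$ matches as well. The only cosmetic difference is that the paper records the Taylor remainder as $O_p(n^{-5/2})$ rather than your $O(n^{-3})$, but either suffices for the $o(n^{-2})$ conclusion.
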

\begin{proof}
From \eqref{chain_rule_-1}, we have
\begin{align*}
&\overset{-1}{ED}[\widehat{m}^*:m]\\
&\triangleq E[\overset{-1}{D}[\widehat{m}^*:m]]\\
&=E_{X^*_{1},\cdots,X^*_{I}}\Bigl[E_{X_{1\cdot},\cdots,X_{I\cdot}} \bigl[E[\overset{-1}{D}[\widehat{m}^*:m] | X_{1\cdot},\ldots,X_{I\cdot}] \big| X^*_{1},\cdots,X^*_{I}\bigr]\Bigr]\\
&= E_{X^*_{1},\cdots,X^*_{I}}\Bigl[E_{X_{1\cdot},\cdots,X_{I\cdot}} \Bigl[E\bigl[\overset{-1}{D}[\widehat{m}^*_f: m_f] + \sum_{i=1}^I (X^*_{i}/n^*) \:\overset{-1}{D}[\widehat{p}_i:p_i] \big| X_{1\cdot},\ldots,X_{I\cdot}\bigr] \Big| X^*_{1},\cdots,X^*_{I}\Bigr]\Bigr]\\
&=\overset{-1}{ED_f^*}+\sum_{i=1}^I  E_{X^*_{1},\cdots,X^*_{I}}\bigl[E_{X_{1\cdot},\cdots,X_{I\cdot}} [ (X^*_{i}/n^*) \:\overset{-1}{ED}_{(i)}(X_{i\cdot}) | X^*_{1},\cdots,X^*_{I} ]\bigr],
\end{align*}
where
\begin{align*}
\overset{-1}{ED_f^*}&\triangleq E[\overset{-1}{D}[\widehat{m}^*_f:m_f]],\\
\overset{-1}{ED}_{(i)}(X_{i\cdot})&\triangleq  E[\overset{-1}{D}[\widehat{p}_{i}:p_{i}]]\big|_{n=X_{i\cdot}}.
\end{align*}

Because
\[
\overset{-1}{ED}_{(i)}(X_{i\cdot})=\frac{s_i}{2X_{i\cdot}}+\frac{1}{24X_{i\cdot}^2}\overset{-1}{A}_{(i)}+o(n^{-2})\big|_{n=X_{i\cdot}}
\]
we have
\begin{align*}
&E_{X^*_{1},\cdots,X^*_{I}}\bigl[E_{X_{1\cdot},\cdots,X_{I\cdot}} [ (X^*_{i}/n^*) \:\overset{-1}{ED}_{(i)}(X_{i\cdot}) | X^*_{1},\cdots,X^*_{I} ]\bigr]\\
&=\frac{s_i}{2n^*}E[X^*_{i}]E[X_{i \cdot}^{-1}]+\frac{1}{24n^*}E[X^*_{i}]\overset{-1}{A}_{(i)}E[X_{i\cdot}^{-2}]+E[o_p(n^{-2})]\\
&=\frac{s_i m_{i\cdot}}{2}E[X_{i \cdot}^{-1}]+\frac{m_{i\cdot}}{24}\overset{-1}{A}_{(i)}E[X_{i\cdot}^{-2}]+E[o_p(n^{-2})]
\end{align*}
Taylor expansions of $1/X_{i\cdot}$ and $1/X^2_{i\cdot}$ are given by
\begin{align*}
\frac{1}{X_{i \cdot}}&=\frac{1}{nm_{i\cdot}}-\frac{1}{(nm_{i\cdot})^2}(X_{i \cdot}-nm_{i\cdot})+\frac{1}{(nm_{i\cdot})^3}(X_{i \cdot}-nm_{i\cdot}) ^2+ O_p(n^{-5/2})\\
\frac{1}{X^2_{i \cdot}}&=\frac{1}{(nm_{i\cdot})^2}+ O_p(n^{-5/2}).
\end{align*}
Using $E[X_{i \cdot}-nm_{i\cdot}]=0$ and $E[(X_{i \cdot}-nm_{i\cdot})^2]=nm_{i\cdot}(1-m_{i\cdot})$, we have
\begin{align}
\label{expec_X^-1}
E[X_{i \cdot}^{-1}]&=\frac{1}{nm_{i\cdot}}+\frac{1-m_{i\cdot}}{(nm_{i\cdot})^2}+O(n^{-5/2})\\
\label{expec_X^-2}
E[X_{i \cdot}^{-2}]&=\frac{1}{(nm_{i\cdot})^2}+O(n^{-5/2})
\end{align}
(We omit the proof that $E[O_p(n^{-5/2})]=O(n^{-5/2})$ holds for the present case.) 
From \eqref{expec_X^-1} and \eqref{expec_X^-2}, we have 
\begin{align*}
&E_{X^*_{1},\cdots,X^*_{I}}\bigl[E_{X_{1\cdot},\cdots,X_{I\cdot}} [ (X^*_{i}/n^*) \:\overset{-1}{ED}_{(i)}(X_{i\cdot}) | X^*_{1},\cdots,X^*_{I} ]\bigr]\\
&=\frac{s_i}{2n}+\frac{1}{24n^2} m_{i\cdot}^{-1}\bigl(12 s_i (1-m_{i\cdot})+\overset{-1}{A}_{(i)}\bigr)+o(n^{-2})
\end{align*}
(We omit the proof that $E[o_p(n^{-2})]=o(n^{-2})$ holds for the present case.) 

Consequently, 
\begin{equation}
\label{first_stage_alter_mid}
\overset{-1}{ED}[\widehat{m}^*:m]=\overset{-1}{ED_f^*}+\sum_{i=1}^I \Bigl(\frac
{s_i}{2n}+\frac{1}{24n^2}m_{i\cdot}^{-1}\bigl(12 s_i(1-m_{i\cdot}) +\overset{-1}{A}_{(i)} \bigr)\Bigr)+o(n^{-2}).
\end{equation}
If we apply \eqref{ED_-1_expan_full} to $\overset{-1}{ED_f^*}$, we obtain 
\[
\overset{-1}{ED_f^*}=\frac{I-1}{2n^*}+\frac{1}{12(n^*)^2}(M_f-1)+o((n^*)^{-2}).
\]
If we insert this result in the right-hand side of \eqref{first_stage_alter_mid}, we obtain \eqref{ED_-1_eval_prior}.

By inserting $s_i=J_i-1$ and
\[
\overset{-1}{A}_{(i)}= 2 m_{i\cdot} \sum_{j=1}^{J_i} m_{ij}^{-1}-2
\]
into \eqref{ED_-1_eval_prior}, we obtain \eqref{ED_-1_eval_prior_2nd_model_full}
\end{proof}

We compare the risks of the two estimators $\widehat{m}$ and $\widehat{m}^*$.
From \eqref{presetn_first_stage} and \eqref{ED_-1_eval_prior}, we notice that
\begin{equation}
\label{difference_pre_present_survey}
\begin{aligned}
&\overset{-1}{ED}[\widehat{m}:m]-\overset{-1}{ED}[\widehat{m}^*:m]\\
&=\frac{I-1}{2}\Bigl(\frac{1}{n}-\frac{1}{n^*}\Bigr)+\frac{M_f-1}{12}\Bigl(\frac{1}{n^2}-\frac{1}{(n^*)^2}\Bigr)-\frac{1}{2n^2}\Bigl(\sum_{i=1}^I s_i m_{i\cdot}^{-1}(1-m_{i\cdot})\Bigr)
\\
&\qquad+o(n^{-2})+o({n^*}^{-2})\\
&=\frac{I-1}{2n}\frac{\beta-1}{\beta}+\frac{1}{12n^2}\Bigl(\frac{\beta^2-1}{\beta^2}(M_f-1)-6\sum_{i=1}^I s_im_{i\cdot}^{-1}(1-m_{i\cdot})\Bigr)\\
&\qquad+o(n^{-2}),\\
\end{aligned}
\end{equation}
where $\beta \triangleq n^*/n$. 

This equation gives us the following suggestions on whether we should use the prior information or not:
\begin{itemize}
\item The difference in the first-order term, $(I-1)(n^{-1}-{n^*}^{-1})/2$,  leads us to the simple conclusion that if $n^* > n$ and both are large enough, it is better to use $\widehat{m}^*$.  
\item To see the effect of the difference in the second-order term, we let  $n^*=n$, then the difference equals
\begin{equation}
\label{difference_pre_present_survey_sec_term}
-\frac{1}{2n^2}\Bigl(\sum_{i=1}^I s_i(m_{i\cdot}^{-1}-1)\Bigr) <0.
\end{equation}
This suggests that if the prior survey is done in a similar scale ($n^* \doteqdot n$), then it is better to use $\widehat{m}$. The ``unified'' estimation using the same sample for both stages has an advantage over the ``non-unified'' estimation that uses the different sample sets for the two stages. (We confirm this observation in the next subsection by simulation.) Notice that in \eqref{chain_rule_-1}, each $\overset{-1}{D}[\widehat{p}_i:p_i]$ is multiplied by $\widehat{m}_{i\cdot}$. If $x_{i\cdot}$, the sample size used for the $i$-th second-stage model estimation, is small, then $\widehat{m}_{i\cdot}=x_{i\cdot}/n$ also gets small. Hence, the loss $\overset{-1}{D}[\widehat{p}_i:p_i]$ is devalued in the total loss evaluation $\overset{-1}{D}[\widehat{m}: m]$ in the unified approach \eqref{original_est_first_stage}. This is natural because when the sample size is small, we should not rely much on the result. This mechanism does not work if we use independent sets of samples for the first and second stages as in the non-unified approach. From \eqref{difference_pre_present_survey_sec_term}, we notice that this negative effect of non-unified approach is strong when the first-stage model contains a category of small probability (i.e., $m_{i\cdot}^{-1}$ is large) and especially if the corresponding second-stage model has many parameters to be estimated (i.e., $s_i$ is large). 
\end{itemize}

The risk for the estimation using $\widehat{m}^p$ is given by the following theorem.
%
%
%
%
\begin{theo}
\label{theory:ED_-1_eval_prior_b}
If we use the estimator \eqref{pool_est_first_stage}, then the risk is given by
\begin{equation}
\label{ED_-1_eval_prior_b}
\begin{aligned}
\overset{-1}{ED}[\widehat{m}^p:m]&= \frac{1}{2(n+n^*)}(I-1)+\frac{1}{2n}\sum_{i=1}^I s_i+
\frac{1}{12{(n+n^*)}^2}(M_f-1)\\
&\quad+\frac{1}{24n^2}\sum_{i=1}^I m_{i\cdot}^{-1}\Bigl(\overset{-1}{A}_{(i)}+12(1-m_{i\cdot})s_i n^*/(n+n^*)\Bigr)\\
&\quad+o(n^{-2})+o((n+n^*)^{-2}). 
\end{aligned}
\end{equation}
In the special case, when the second-stage models are all full models, the total risk is given by
\begin{equation}
\label{ED_-1_eval_prior_b_2nd_full}
\begin{aligned}
\overset{-1}{ED}[\widehat{m}^p:m]&= \frac{1}{2(n+n^*)}(I-1)+\frac{1}{2n} (p-I+1)+
\frac{1}{12{(n+n^*)}^2}(M_f-1)\\
&\quad+\frac{1}{12n^2}\Bigl(M-M_f+ \frac{6n^*}{n+n^*}\Bigl(\sum_{i=1}^I m_{i\cdot}^{-1}J_i-M_f-(p-I+1)\Bigr) \Bigr)\\
&\quad+o(n^{-2})+o(((n+n^*)^{-2}). 
\end{aligned}
\end{equation}
\end{theo}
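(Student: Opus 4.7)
The plan is to mirror the strategy of Theorems 1 and 2, starting from the chain-rule decomposition \eqref{chain_rule_-1} applied to $\widehat{m}^p$:
\[
\overset{-1}{D}[\widehat{m}^p:m] = \overset{-1}{D}[\widehat{m}^p_f:m_f] + \sum_{i=1}^{I} \widehat{m}^p_{i\cdot}\,\overset{-1}{D}[\widehat{p}_i:p_i].
\]
For the first summand, notice that $\widehat{m}^p_f$ is the MLE of the first-stage full model based on the pooled sample of effective size $n+n^*$; hence \eqref{ED_-1_expan_full} gives directly
\[
\overset{-1}{ED_f^p} = \frac{I-1}{2(n+n^*)} + \frac{M_f-1}{12(n+n^*)^2} + o\bigl((n+n^*)^{-2}\bigr).
\]
So the real work is to expand $E[\widehat{m}^p_{i\cdot}\,\overset{-1}{D}[\widehat{p}_i:p_i]]$ for each $i$.

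The crucial structural fact is that $\widehat{p}_i$ depends only on the present sample (through $X_{i1},\ldots,X_{iJ_i}$, hence on $X_{i\cdot}$), while $X_i^*$ is independent of it. So, writing $\widehat{m}^p_{i\cdot}=(X_{i\cdot}+X_i^*)/(n+n^*)$, I split
\[
E\!\left[\widehat{m}^p_{i\cdot}\overset{-1}{D}[\widehat{p}_i:p_i]\right] = \frac{1}{n+n^*}\Bigl\{E\bigl[X_{i\cdot}\,\overset{-1}{ED}_{(i)}(X_{i\cdot})\bigr] + E[X_i^*]\,E\bigl[\overset{-1}{ED}_{(i)}(X_{i\cdot})\bigr]\Bigr\},
\]
using the tower property together with \eqref{asy_expan_ith_sub} and the Appendix lemma that legitimises $o(n^{-2})\!\mid_{n=X_{i\cdot}} = o_p(n^{-2})$.

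The first inner expectation simplifies almost algebraically: $X_{i\cdot}\cdot(s_i/2X_{i\cdot}) = s_i/2$ is exact, and the remaining contribution needs only $E[X_{i\cdot}^{-1}] = (nm_{i\cdot})^{-1}+O(n^{-2})$, giving
\[
E\bigl[X_{i\cdot}\,\overset{-1}{ED}_{(i)}(X_{i\cdot})\bigr] = \frac{s_i}{2} + \frac{\overset{-1}{A}_{(i)}}{24\,n\,m_{i\cdot}} + o(n^{-1}).
\]
For the second piece I need $E[\overset{-1}{ED}_{(i)}(X_{i\cdot})]$ to order $n^{-2}$, so I reuse both \eqref{expec_X^-1} and \eqref{expec_X^-2} (already established in the proof of Theorem \ref{theorem:ED_-1_eval_prior}) and multiply by $E[X_i^*]=n^*m_{i\cdot}$. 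Combining the two pieces and using the identity $n + n^*$ in the numerator pulled from $s_i(n+n^*)/(2n)$, the overall $1/(n+n^*)$ prefactor cancels cleanly in the leading term to produce $s_i/(2n)$, while the residual $1-m_{i\cdot}$ term acquires the characteristic weight $n^*/(n+n^*)$:
\[
E\!\left[\widehat{m}^p_{i\cdot}\overset{-1}{D}[\widehat{p}_i:p_i]\right] = \frac{s_i}{2n} + \frac{1}{24n^2 m_{i\cdot}}\Bigl(\overset{-1}{A}_{(i)} + 12\,s_i(1-m_{i\cdot})\,\tfrac{n^*}{n+n^*}\Bigr) + o(\cdot).
\]
Summing over $i$ and adding the first-stage contribution yields \eqref{ED_-1_eval_prior_b}.

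For the special case \eqref{ED_-1_eval_prior_b_2nd_full}, I substitute $s_i = J_i-1$ and $\overset{-1}{A}_{(i)} = 2m_{i\cdot}\sum_j m_{ij}^{-1}-2$, exactly as done at the end of Theorems \ref{ED_-1_2stage} and \ref{theorem:ED_-1_eval_prior}. The terms $\sum_i m_{i\cdot}^{-1}\overset{-1}{A}_{(i)} = 2(M-M_f)$ and $\sum_i(J_i-1)(m_{i\cdot}^{-1}-1) = \sum_i J_i m_{i\cdot}^{-1} - M_f - (p+1-I) + (I-I)$, after a short rearrangement using $p+1=\sum_i J_i$, deliver the claimed form. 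The main obstacle is purely bookkeeping: tracking which error terms are $o(n^{-2})$ versus $o((n+n^*)^{-2})$ when mixing the two sample sizes, and making sure the $1/(n+n^*)$ prefactor is expanded consistently so that the $s_i/(2n)$ leading term (rather than $s_i/(2(n+n^*))$) emerges; once the independence of $X_i^*$ from the present sample is used correctly, everything else is a careful assembly of the Taylor expansions already at hand.
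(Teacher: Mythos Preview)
Your proposal is correct and follows essentially the same route as the paper: chain-rule decomposition, application of \eqref{ED_-1_expan_full} to the pooled first-stage MLE, conditioning on $(X_{1\cdot},\ldots,X_{I\cdot})$ together with independence of $X_i^*$, and substitution of the Taylor expansions \eqref{expec_X^-1}--\eqref{expec_X^-2}. The only cosmetic difference is that you group the cross term by first separating the $X_{i\cdot}$ and $X_i^*$ contributions, whereas the paper expands $\overset{-1}{ED}_{(i)}(X_{i\cdot})$ first and then distributes $(X_{i\cdot}+X_i^*)/(n+n^*)$ across the resulting pieces; both arrangements lead to the same four elementary terms and the same cancellation producing $s_i/(2n)$.
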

\begin{proof}
As in \eqref{decomp_KL_two_stage}, we have
\begin{align*}
&\overset{-1}{ED}[\widehat{m}^p:m]\\
&\triangleq E[\overset{-1}{D}[\widehat{m}^p:m]]\\
&=E_{X^*_{1},\cdots,X^*_{I}}\Bigl[E_{X_{1\cdot},\cdots,X_{I\cdot}} \bigl[E[\overset{-1}{D}[\widehat{m}^p:m] | X_{1\cdot},\ldots,X_{I\cdot}] \big| X^*_{1},\cdots,X^*_{I}\bigr]\Bigr]\\
&= E_{X^*_{1},\cdots,X^*_{I}}\Bigl[E_{X_{1\cdot},\cdots,X_{I\cdot}} \Bigl[E\bigl[\overset{-1}{D}[\widehat{m}^p_f: m_f] \\
&\hspace{45mm}+ \sum_{i=1}^I(X_{i\cdot}+X^*_{i})/(n+n^*) \:\overset{-1}{D}[\widehat{p}_i:p_i] \big| X_{1\cdot},\ldots,X_{I\cdot}\bigr] \Big| X^*_{1},\cdots,X^*_{I}\Bigr]\Bigr]\\
&=\overset{-1}{ED_f^p}+\sum_{i=1}^I  E_{X^*_{1},\cdots,X^*_{I}}\bigl[E_{X_{1\cdot},\cdots,X_{I\cdot}} [ (X_{i\cdot}+X^*_{i})/(n+n^*) \:\overset{-1}{ED}_{(i)}(X_{i\cdot}) | X^*_{1},\cdots,X^*_{I} ]\bigr],
\end{align*}
where
\begin{align*}
\overset{-1}{ED_f^p}&\triangleq E[\overset{-1}{D}[\widehat{m}^p_f:m_f]],\\
\overset{-1}{ED}_{(i)}(X_{i\cdot})&\triangleq  E[\overset{-1}{D}[\widehat{p}_{i}:p_{i}]]\big|_{n=X_{i\cdot}}.
\end{align*}

Because
\[
\overset{-1}{ED}_{(i)}(X_{i\cdot})=\frac{s_i}{2X_{i\cdot}}+\frac{1}{24X_{i\cdot}^2}\overset{-1}{A}_{(i)}+o(n^{-2})\big|_{n=X_{i\cdot}}
\]
we have
\begin{align*}
&E_{X^*_{1},\cdots,X^*_{I}}\bigl[E_{X_{1\cdot},\cdots,X_{I\cdot}} [ (X_{i\cdot}+X^*_{i})/(n+n^*) \:\overset{-1}{ED}_{(i)}(X_{i\cdot}) | X^*_{1},\cdots,X^*_{I} ]\bigr]\\
&=\frac{s_i}{2(n+n^*)}E[X^*_{i}]E[X_{i \cdot}^{-1}]+\frac{s_i}{2(n+n^*)}+\frac{1}{24(n+n^*)}\overset{-1}{A}_{(i)}E[X^*_{i}]E[X_{i\cdot}^{-2}]\\
&\qquad+\frac{1}{24(n+n^*)}\overset{-1}{A}_{(i)}E[X_{i\cdot}^{-1}]+E[o_p(n^{-2})]\\
&=\frac{s_i m_{i\cdot}}{2}\frac{n^*}{n+n^*}E[X_{i \cdot}^{-1}]+\frac{s_i}{2(n+n^*)}+\frac{m_{i\cdot}}{24}\frac{n^*}{n+n^*}\overset{-1}{A}_{(i)}E[X_{i\cdot}^{-2}]\\
&\qquad+\frac{1}{24(n+n^*)}\overset{-1}{A}_{(i)}E[X_{i\cdot}^{-1}]+E[o_p(n^{-2})]
\end{align*}
Because of \eqref{expec_X^-1} and \eqref{expec_X^-2}, we have 
\begin{align*}
&E_{X^*_{1},\cdots,X^*_{I}}\bigl[E_{X_{1\cdot},\cdots,X_{I\cdot}} [ (X_{i\cdot}+X^*_{i})/(n+n^*) \:\overset{-1}{ED}_{(i)}(X_{i\cdot}) | X^*_{1},\cdots,X^*_{I} ]\bigr]\\
&=\frac{s_i}{2n}+\frac{1}{24n^2} m_{i\cdot}^{-1}\Bigl(12 s_i (1-m_{i\cdot})\frac{n^*}{n+n^*}+\overset{-1}{A}_{(i)}\Bigr)+o(n^{-2})
\end{align*}
(We omit the proof that $E[o_p(n^{-2})]=o(n^{-2})$ holds for the present case.) 

Consequently,
\begin{equation}
\label{first_stage_pool_mid}
\begin{aligned}
\overset{-1}{ED}[\widehat{m}^p:m]&=\overset{-1}{ED_f^p}+\sum_{i=1}^I \Bigl(\frac
{s_i}{2n}+\frac{1}{24n^2}m_{i\cdot}^{-1}\bigl(12 s_i(1-m_{i\cdot})n^*/(n+n^*) +\overset{-1}{A}_{(i)} \bigr)\Bigr)\\
&\quad+o(n^{-2}).
\end{aligned}
\end{equation}
If we apply \eqref{ED_-1_expan_full} to $\overset{-1}{ED_f^p}$, we obtain 
\[
\overset{-1}{ED_f^p}=\frac{I-1}{2(n+n^*)}+\frac{1}{12(n+n^*)^2}(M_f-1)+o((n+n^*)^{-2}).
\]
If we insert this result into \eqref{first_stage_pool_mid}, we obtain \eqref{ED_-1_eval_prior_b}.

By inserting $s_i=J_i-1$ and
\[
\overset{-1}{A}_{(i)}= 2 m_{i\cdot} \sum_{j=1}^{J_i} m_{ij}^{-1}-2
\]
into \eqref{ED_-1_eval_prior_b}, we obtain \eqref{ED_-1_eval_prior_b_2nd_full}.
\end{proof}
Let us compare the risk of $\widehat{m}^p$ with that of $\widehat{m}$.  From \eqref{presetn_first_stage} and \eqref{ED_-1_eval_prior_b}, we notice that
\begin{equation}
\label{difference_present_pool}
\begin{aligned}
&\overset{-1}{ED}[\widehat{m}:m]-\overset{-1}{ED}[\widehat{m}^p:m]\\
&=\frac{I-1}{2}(n^{-1}-(n+n^*)^{-1})+\frac{M_f-1}{12}(n^{-2}-(n+n^*)^{-2})-\frac
{n^*}{2n^2(n+n^*)}\sum_{i=1}^I m_{i\cdot}^{-1}(1-m_{i\cdot})s_i\\
&\quad+o(n^{-2})+o((n+n^*)^{-2})\\
&=\frac{I-1}{2n}\frac{\beta}{1+\beta}+\frac{1}{n^2}\frac{\beta^2+2\beta}{(1+\beta)^2}\frac{M_f-1}{12}-\frac{1}{2n^2}\frac{\beta}{1+\beta}\sum_{i=1}^I m_{i\cdot}^{-1}(1-m_{i\cdot})s_i\\
&\quad+o(n^{-2})+o((n+n^*)^{-2}).
\end{aligned}
\end{equation}
We observe the following points:
\begin{itemize}
\item From the first-order term of \eqref{difference_present_pool}, it is easily confirmed that $\widehat{m}^p$ is better than $\widehat{m}$ with large enough size of $n$ and $n^*$. 
\item Taking the second-order terms into account, \eqref{difference_present_pool} could be negative. This could happen when large $s_i$ exists, that is, some of the second-stage models contain a large number of parameters. For example, let $m_{1\cdot}=\cdots =m_{I\cdot}=1/I$ and suppose that $n^*$ is relatively so large to $n$ that $\beta/(1+\beta)\doteqdot 1$ and $(\beta^2+\beta)/(1+\beta)^2 \doteqdot 1$, then \eqref{difference_present_pool} approximately equals 
\[
(I-1)\Bigl(\frac{1}{2n}+\frac{I+1}{12n^2}-\frac{1}{2n^2}\sum_{i=1}^I s_i\Bigr)
\]
up to the second-order terms. This is negative when $n < \sum_{i=1}^I s_i -(I+1)/6$. Because we are neglecting the higher-order terms, we are unsure if the situation $\overset{-1}{ED}[\widehat{m}:m]<\overset{-1}{ED}[\widehat{m}^p:m]$ could happen with some $n$. (We confirm this later by simulation in Section \ref{subsec:example}.) Anyway, we notice that superiority of the pooled estimator fades when the second-stage models are highly complicated.
\item As in the estimator $\widehat{m}^*$, the use of another set of sample for the first-stage model estimation has an adverse effect on the risk. To see this phenomenon, consider the case that the present sample size equals to $n+n^*(\triangleq \tilde{n})$. In this case, \eqref{presetn_first_stage} with $n$ substituted with $\tilde{n}$ turns out 
\begin{equation}
\label{presetn_first_stage_ntilde} 
\begin{aligned}
&\overset{-1}{ED}[\widehat{m}:m](\tilde{n})\\
&=\frac{1}{2\tilde{n}}\Bigl(I-1+\sum_{i=1}^I s_i\Bigr)+\frac{1}{12\tilde{n}^2}(M_f-1)+\frac{1}{24\tilde{n}^2}\Bigl(\sum_{i=1}^I m_{i\cdot}^{-1}\overset{-1}{A}_{(i)}\Bigr)+o(\tilde{n}^{-2}), 
\end{aligned}
\end{equation}
and if $\overset{-1}{A}_{(i)} >0,\ i=1,\ldots,I$, then neglecting the higher-order terms, we have
\begin{equation}
\begin{aligned}
&\overset{-1}{ED}[\widehat{m}:m](\tilde{n})-\overset{-1}{ED}[\widehat{m}^p:m](n,n^*)\\
&=-\frac{1}{2n}\frac{\beta}{1+\beta}\sum_{i=1}^I s_i-\frac{1}{24n^2} \frac{\beta^2+2\beta}{(1+\beta)^2}\sum_{i=1}^I m_{i\cdot}^{-1}\overset{-1}{A}_{(i)}\\
&\qquad -\frac{1}{2n^2}\frac{\beta}{1+\beta}\sum_{i=1}^I m_{i\cdot}^{-1}(1-m_{i\cdot})s_i <0.
\end{aligned}
\end{equation}
Under the condition of the same total sample size $\tilde{n}$, unified approach $\widehat{m}$ is superior to non-unified approach $\widehat{m}^p$.
\end{itemize}

As for the comparison between $\widehat{m}^*$ and $\widehat{m}^p$, it is obvious from
\eqref{ED_-1_eval_prior_b} and \eqref{ED_-1_eval_prior} that the latter is always asymptotically (with respect to both of the first and the second-order terms) better than the former. 
%
%
%
%
%
%
%
\subsection{Examples}
\label{subsec:example}

We consider three examples to check the theoretical results in \ref{subsec: theo_result} for the comparison of three estimators $\widehat{m}$, $\widehat{m}^*$, and $\widehat{m}^p$. Each of three examples is a multinomial distribution over a contingency table, where there exists a prior survey on the marginal distribution over the column categories.

First, we use an artificial example to illustrate some points mentioned in the previous subsection. We focus on the situation when the sample size $n$ is relatively small. It is rather impractical setting, but wherein the second-order term works in a non-negligible way and interesting (rather pathological) behavior of the three estimators can be watched. 

The next two examples are the distributions derived from real datasets. We will observe the behavior of the three estimators under practical conditions, where $n$ and $n^*$ are of moderate size. We can confirm the relative performance of the three estimators that is expected from the theoretical result on their asymptotic risks.

We introduce the common notations among the examples. The abbreviations ``pre.$*$'', ``pri.$*$,``pool.$*$'' correspond to $\widehat{m}$, $\widehat{m}^*$, $\widehat{m}^p$, respectively. The notation``$*$.sim''  shows the risk (or the ``required sample size'' defined later) as obtained by simulation. We generated samples for the present and prior surveys with the sizes of given $n$ and $n^*$, respectively, and calculated the loss of the estimators. Repeating this $10^4 (or 10^6)$ times, we obtained the simulated risk by averaging the losses. We also calculated the approximated risk from \eqref{presetn_first_stage}, \eqref{ED_-1_eval_prior}, and  \eqref{ED_-1_eval_prior_b} by neglecting the terms $o(n^{-2})$ and $o((n^*)^{-2})$, which are denoted by ``$*$.app''. 

The ``required sample size (r.s.s.)'' is defined as follows, according to estimators to be compared. Let the risks of the three estimators as the functions of  $n$ and $n^*$ be denoted by $\overset{-1}{ED}[\widehat{m}:m](n)$, $\overset{-1}{ED}[\widehat{m}^*:m](n,n^*)$, $\overset{-1}{ED}[\widehat{m}^p:m](n,n^*)$.\\
1. Comparison between $\widehat{m}$ and $\widehat{m}^*$ through r.s.s. The ``r.s.s for $\widehat{m}^*$ to $\widehat{m}$ under the condition $n=n_0$'' is the solution of $n^*$ for the equation
\begin{equation}
\label{def_rss}
\overset{-1}{ED}[\widehat{m}^*:m](n_0,n^*)=\overset{-1}{ED}[\widehat{m}:m](n_0).
\end{equation}
As we mentioned in the previous subsection, if $n^*=n_0$, then (at least asymptotically) the left side is larger than the right side in \eqref{def_rss}. Therefore, $n^*$ must be larger than $n_0$ so that the equation holds. The r.s.s. shows the degree of inefficiency of $\widehat{m}^*$ (non-unified approach) to $ \widehat{m}$ (unified approach).\\
2. Comparison between $\widehat{m}$ and $\widehat{m}^p$ through r.s.s.  The ``r.s.s for $\widehat{m}$ to $\widehat{m}^p$ under the condition $(n,n^*)=(n_0,n_0^*)$'' is the solution of $n$ for the equation
\begin{equation}
\label{def_rss_2}
\overset{-1}{ED}[\widehat{m}:m](n)=\overset{-1}{ED}[\widehat{m}^p:m](n_0,n_0^*).
\end{equation}
In the previous subsection, we noticed that (at least asymptotically) if $n=n_0+n_0^*$, then the left-hand side is smaller than the right-hand side. Therefore, the solution $n$ for the equation is smaller than $n_0+n_0^*$. The r.s.s. indicates the degree of  efficiency of $\widehat{m}$ to  $\widehat{m}^p$ in view of the total sample size. 

Note that the risks that appear in \eqref{def_rss} and \eqref{def_rss_2} are calculated from either simulation or approximation.
%
%
%
%
\\
\\
\textit{Example 1: 100 by 2 Contingency Table}\\
 Consider a 100 by 2 contingency table and suppose that the distribution over the table  is uniform, that is, each category (cell) has the probability $1/200$. Suppose that we have a prior survey on the two column categories, hence $I=2$ and $J_1=J_2=100$. The second- stage model is a full model over the row categories with a column fixed.

Table \ref{table:ex1_risk} shows the risks of the three estimators calculated from both simulation and approximation, where $*$.sim is the average of $10^6$ simulated losses (its standard deviation is at most $6.13E-6$).

 The r.s.s. of $\widehat{m}^*$ to $\widehat{m}$ under the condition $n=n_0$ is given in Table \ref{table:ex1_rss_pri} with several values of $n_0$. Table \ref{table:ex1_rss_pool} shows the r.s.s. of $\widehat{m}$ to $\widehat{m}^p$ under the condition $(n,n^*)=(n_0,n_0^*)$ with several values $(n_0, n_0^*)$ satisfying $n_0=n_0^*$. 

We can observe the following points from this result. 
\begin{itemize}
\item We confirm that $\widehat{m}$ outperforms $\widehat{m}^*$ under same sample size $n=n^*$. (See the rows for $(n,n^*)=(200,200),\ldots,(1000,1000)$ of Table \ref{table:ex1_risk}.)  When $n$ is small as $n=100,150,200$, even $n^*$ as large as $10^5$ is not enough to compensate the inferiority of non-unified approach. (See the first three rows of Table \ref{table:ex1_risk}.) The r.s.s. of Table \ref{table:ex1_rss_pri} shows more clearly the inferiority of $\widehat{m}^*$ to $\widehat{m}$. Here, $\widehat{m}^*$ needs approximately $110\%-250\%$ larger sample size compared with $\widehat{m}$ to attain the same risk.
\item It is confirmed that $\widehat{m}^p$ is outperformed by $\widehat{m}$ under some  situations like $(n, n^*)=(100,10^5), (150,10^5), (200,10^5), (200, 200)$ in Table \ref{table:ex1_risk}. The relation $\overset{-1}{ED}[\widehat{m}:m](n+n^*)<\overset{-1}{ED}[\widehat{m}^p:m](n,n^*)$ is also confirmed from the fact $\overset{-1}{ED}[\widehat{m}:m](400)=0.283618 < 0.571507=\overset{-1}{ED}[\widehat{m}^p:m](200,200)$ or $\overset{-1}{ED}[\widehat{m}:m](800)=0.132419 < 0.283377=\overset{-1}{ED}[\widehat{m}^p:m](400,400)$. The r.s.s. in Table 
\ref{table:ex1_rss_pool} shows that $\widehat{m}$ needs only 1 to 5 more observations than $n_0^*$ to be par with $\widehat{m}^p$ with the sample size of $(n_0^*, n_0^*)$. 
\item The last 10 rows of Table \ref{table:ex1_risk} shows an interesting aspect of the risks of $\widehat{m}^*$ and $\widehat{m}^p$. As $n$ is fixed to be 90 and $n^*$ increases from $100$ to $1000$, the risk of $\widehat{m}^*$ decreases, while the risk of $\widehat{m}^p$ increases. Increasing the sample size $n^*$ naturally reduces the risk for $\widehat{m}^*$. On the contrary, in the case of $\widehat{m}^p$, the adverse effect of blending two sets of samples outweigh the positive effect of increasing the sample size.
\item The superiority of $\widehat{m}^p$ to $\widehat{m}^*$ is confirmed from the simulated risk.
\item The discrepancy between the simulated risk ($*$.sim) and the approximated risk ($*$.app) could be large when $n$ is small. However, as for the order of the three estimators' risks,  the orders in the simulation and approximation coincide in most cases in Table \ref{table:ex1_risk}.  
\end{itemize}
\begin{table}
\caption{Example 1. Risk of Estimators}
\label{table:ex1_risk}
\footnotesize
\centering
\begin{tabular}[t]{c|c|c|c|c|c|c|c}
$n$ &  $n^*$ & pre.sim  & pri.sim & pool.sim  & pre.app & pri.app & pool.app \\ \hline
100  & 100000  & 1.004442  & 1.006667  & 1.006660  & 1.328325  & 1.333205  & 1.333195  \\ 
\hline
150  & 100000  & 0.734960  & 0.735729  & 0.735722  & 0.811478  & 0.812538  & 0.812532  \\
\hline
200  & 100000  & 0.571438  & 0.571580  & 0.571574  & 0.580831  & 0.580805  & 0.580800  \\
\hline
250  & 100000  & 0.462151  & 0.461980  & 0.461976  & 0.451332  & 0.450917  & 0.450913  \\
\hline
300  & 100000  & 0.384632  & 0.384295  & 0.384291  & 0.368703  & 0.368138  & 0.368135  \\
\hline
200  & 200  & 0.571440  & 0.574083  & 0.571507  & 0.580831  & 0.583306  & 0.580814  \\
\hline
400  & 400  & 0.283618  & 0.284389  & 0.283377  & 0.269583  & 0.270202  & 0.269266  \\
\hline
600  & 600  & 0.181578  & 0.181912  & 0.181328  & 0.175092  & 0.175367  & 0.174813  \\
\hline
800  & 800  & 0.132419  & 0.132599  & 0.132196  & 0.129583  & 0.129738  & 0.129348  \\
\hline
1000  & 1000  & 0.104159  & 0.104270  & 0.103964  & 0.102833  & 0.102932  & 0.102633  \\
\hline
90 & 100 & 1.081035  & 1.088799  & 1.082469  & 1.517068  & 1.528729  & 1.520553  \\ \hline
90 & 200 & 1.081041  & 1.086287  & 1.082924  & 1.517068  & 1.526210  & 1.521638  \\ \hline
90 & 300 & 1.081037  & 1.085444  & 1.083137  & 1.517068  & 1.525373  & 1.522167  \\ \hline
90 & 400 & 1.081040  & 1.085031  & 1.083272  & 1.517068  & 1.524955  & 1.522480  \\ \hline
90 & 500 & 1.081017  & 1.084757  & 1.083334  & 1.517068  & 1.524705  & 1.522687  \\ \hline
90 & 600 & 1.081050  & 1.084623  & 1.083428  & 1.517068  & 1.524538  & 1.522835  \\ \hline
90 & 700 & 1.081047  & 1.084500  & 1.083470  & 1.517068  & 1.524418  & 1.522945  \\ \hline
90 & 800 & 1.081041  & 1.084406  & 1.083501  & 1.517068  & 1.524329  & 1.523030  \\ \hline
90 & 900 & 1.081037  & 1.084332  & 1.083525  & 1.517068  & 1.524260  & 1.523098  \\ \hline
90 & 1000 & 1.081046  & 1.084285  & 1.083557  & 1.517068  & 1.524204  & 1.523153  \\ \hline
\end{tabular}
\caption{Example 1. R.s.s. of $\widehat{m}^*$ to $\widehat{m}$}
\label{table:ex1_rss_pri}
\centering
\begin{tabular}[t]{c|c|c|c|c|c|c|c|c|c}
$n_0$ & 400 & 600 & 800 & 1000 & 1200 & 1400 & 1600 & 1800 & 2000\\
\hline
r.s.s.sim &  995 &  1014 &  1142 & 1296 & 1467 & 1658 & 1854 &2034 & 2229\\
\hline 
r.s.s.app & 791 &  895  & 1063  & 1247 & 1437 & 1631 &1826 &2023 &2220 \\
\hline
\end{tabular}
\caption{Example 1. R.s.s. of $\widehat{m}$ to $\widehat{m^p}$}
\label{table:ex1_rss_pool}
\centering
\begin{tabular}[t]{c|c|c|c|c|c|c|c|c|c}
$n_0=n^*_0$ & 400 & 600 & 800 & 1000 & 1200 & 1400 & 1600 & 1800 & 2000 \\
\hline
r.s.s.sim           & 401  & 601  & 801  & 1002  & 1203  & 1403  & 1604  & 1804  & 2005  \\
\hline
r.s.s.app           & 401  & 601  & 801  & 1002  & 1202  & 1403  & 1603  & 1804  & 2004  \\
\hline
\end{tabular}
\end{table}
%
%
%
%
\bigskip
\textit{Example 2: Breast cancer classification}\\
 The second example is the the breast cancer data from "UCI machine learning repository" (https://archive.ics.uci.edu/ml/datasets/Breast+Cancer). We made a cross-tabulation w.r.t. the variables ``age group'' (5 groups: 30--39, ... ,70--79) and `` the degree of malignancy'' (3 levels: 1, 2, 3) by excluding the only person that is in his/her twenties from the original dataset. By dividing each cell by 285 (the total number of observations), we obtained the relative frequency (see Table \ref{table:breast_cancer_prob}). We suppose that they are the true probability $m_{ij}$ for the categories $C_{ij}$ for $ i=1,\ldots,5, j=1,\ldots,3$ (the first and second index represent the age group and the malignancy, respectively).  

We consider a two-stage model, where the first-stage model is the full model over the age groups, and each of the second-stage models is a full model over malignancy levels within an age group. Hence, $I=5$ and $J_i=3,\ i=1,\ldots 5$. The marginal probability (the column sum in the table) of each age group corresponds to $m_{i\cdot},\ i=1,\ldots,5$, the first-stage parameter. Suppose that we have a prior survey on the ratio of age groups of the sample size $n^*$. 

For several values of $(n,n^*)$, we calculated the approximated risks and the simulated risks. The simulated risk is the average of $10^4$ losses calculated from each generated sample. The result is given in Table \ref{Table:three_est_risk_cancer}, where the simulated risk is in the parenthesis. We calculated two kinds of r.s.s.'s. One is the r.s.s. of $\widehat{m}^*$ to $\widehat{m}$ under the condition $n=n_0$. Table \ref{Table:rss_est.pri_est_pre_cancer} shows the r.s.s. for several values of $n_0$. The other r.s.s. is that of $\widehat{m}$ to $\widehat{m}^p$ under the condition $(n, n^*)=(n_0, n_0^*)$. In Table \ref{Table:rss_est.pre_est_pool_cancer}, we show the r.s.s. for several cases where $n_0=n_0^*$. 
\begin{table}
\caption{Example 2. Breast cancer classification} 
\label{table:breast_cancer_prob}
\centering
\begin{tabular}{|c | c | c | c | c | c| }
\hline
   &  30-39  &  40-49  &  50-59  &  60-69  &  70-79 \\
\hline
1 &0.025 & 0.063 & 0.088 & 0.060 & 0.014 \\
\hline
2 &0.060 & 0.168 & 0.137 & 0.084 & 0.004 \\
\hline
3 &0.042 & 0.084 & 0.112 & 0.056 & 0.004 \\
\hline
Column sum & 0.126 & 0.317 & 0.337 & 0.200 & 0.021\\
\hline
\end{tabular}
\caption{Example 2. Risk of Estimators}
\label{Table:three_est_risk_cancer}
\centering
\begin{tabular}{l|l|c|c|c}
              &             & $n^*=200$ &  $n^*=600$ & $n^*=1000$ \\
\hline 
              & pre.app(pre.sim)   &  0.0367(0.0361)  &  0.0367(0.0361)  &  0.0367(0.0361) \\
$n=200$ &  pri.app(pri.sim)   &  0.0383(0.0384)  &  0.0315(0.0315)  &  0.0301(0.0302) \\
             &  pool.app(pool.sim) &  0.0320(0.0322)  &  0.0298(0.0302)  &  0.0290(0.0295) \\
\hline
              &  pre.app(pre.sim)    &  0.0119(0.0119)  &  0.0119(0.0119) &  0.0119(0.0119) \\
$n=600$ &   pri.app(pri.sim)   &  0.0188(0.0191)  &  0.0120(0.0122) &  0.0107(0.0108) \\
             &   pool.app(pool.sim) &  0.0110(0.0112)  &   0.0102(0.0104) &  0.0098(0.0100) \\
\hline
              & pre.app(pre.sim)    &  0.0071(0.0071)  &  0.0071(0.0071)  &  0.0071(0.0071) \\
$n=1000$&  pri.app(pri.sim)  &  0.0152(0.0154)  &  0.0085(0.0085)  &  0.0071(0.0072) \\
             &  pool.app(pool.sim) &  0.0067(0.0068)  &  0.0063(0.0064)   &  0.0061(0.0061) \\
\end{tabular}
\caption{Example 2. R.s.s. of $\widehat{m}^*$ to $\widehat{m}$}
\label{Table:rss_est.pri_est_pre_cancer}
\centering
\begin{tabular}{l|c|c|c|c|c}
$n_0$ & 200 & 400 & 600 & 800 & 1000 \\
\hline
$r.s.s.app(r.s.s.sim)$ & 236(255) & 433(445) & 633(652) & 832(850) & 1032(1029)\\
\end{tabular}
\caption{Example 2. R.s.s. of $\widehat{m}$ to $\widehat{m}^p$}
\label{Table:rss_est.pre_est_pool_cancer}
\centering
\begin{tabular}{l|c|c|c|c|c}
$n_0=n^*_0$ & 200 & 400 & 600 & 800 & 1000 \\
\hline
$r.s.s.app(r.s.s.sim)$ & 226(223) & 459(456) & 692(686) & 925(921) & 1158(1159)\\
\end{tabular}
\end{table}
We make the following observations:
\begin{itemize}
\item In the lower left part (including the diagonal part) of Table \ref{Table:three_est_risk_cancer},  $\widehat{m}$ is superior to  $\widehat{m}^*$, as is indicated in Section \ref{subsec: theo_result}. When $(n, n^*)=(200,600)$, $(200,1000)$, $(600, 1000)$, the result is opposite. From Table \ref{Table:rss_est.pri_est_pre_cancer}, we notice that when $n=200$, the prior survey needs 28\% more samples ($n^*=255$) than the present survey, while just 3\% more samples ($n^*=1029$) suffice when $n=1000$.
\item In all cases of Table \ref{Table:three_est_risk_cancer}, $\widehat{m}^p$ is superior to $\widehat{m}$. It seems that each $(n, n^*)$ in Table \ref{Table:three_est_risk_cancer} is large enough to guarantee the superiority of $\widehat{m}^p$ to $\widehat{m}$.  In view of the total sample size, the performance of  $\widehat{m}^p$ is inferior to $\widehat{m}$ as is shown in Section \ref{subsec: theo_result}. Table \ref{Table:rss_est.pre_est_pool_cancer} shows that  $\widehat{m}$ with the sample size of $1.12n_0-1.16n_0$ attain the same risk as $\widehat{m}^p$ with the total sample size $2n_0$. 
\item As the  theoretical result indicates, $\widehat{m}^p$ is always superior to $\widehat{m}^*$.  
\item There is no large discrepancy between the simulation and the approximation for the risk and the r.s.s. under the given sample sizes.
\end{itemize}
%
%
%
%
\bigskip
\textit{Example 3: Household classification}\\
 As the third example, we use data from ``2014 national survey of family income and expenditure'' by Statistics Bureau in Japan (https://www.stat.go.jp/english/data/zensho/
\\index.html). Table \ref{table:prob_income_age} is the relative frequency obtained from the cross tabulation of 100006 households by ``Yearly income group $(Y1,...,Y10)$'' and ``Household age group $(H1,...,H6)$''. We use this relative frequency as the population parameter $m_{ij},\ 1\leq i \leq 10,\ 1\leq j \leq 6$. 

The two-staged model is considered where the first-stage model is the full model over age groups $H1, ..., H6$, and each second-stage model is a full model over the income groups $Y1,..., Y10$ within an age group. We suppose that there is a prior survey on the first-stage model.  For the risk of the three estimators under several values of $(n,n^*)$, see Table \ref{Table:three_est_risk_household}. The r.s.s. of $\widehat{m}^*$ to $\widehat{m}$  under the condition $n=n_0$ is shown in Table \ref{Table:rss_est.pri_est_pre_household}. The r.s.s. of $\widehat{m}$ to $\widehat{m}^p$ under the condition $(n, n^*)=(n_0, n_0^*)$ is  presented in Table \ref{Table:rss_est.pre_est_pool_household}.  Because the results are  similar  to Example 2, we omit the comments.
\begin{table}
\caption{Classification of households by income and age}
\label{table:prob_income_age}
\centering
\footnotesize
\begin{tabular}{|c|c|c|c|c|c|c|}
\hline
    &  H1 & H2 & H3 & H4 & H5 & H6 \\ \hline
Y1 & 0.00161 & 0.00232 & 0.00512 & 0.00395 & 0.00468 & 0.00066  \\ \hline
Y2 & 0.00331 & 0.0081 & 0.00953 & 0.00783 & 0.01145 & 0.00278  \\ \hline
Y3 &0.00974 & 0.02109 & 0.02046 & 0.01499 & 0.02536 & 0.00494  \\ \hline
Y4 &0.00799 & 0.03519 & 0.03229 & 0.02017 & 0.0338 & 0.00708  \\ \hline
Y5 &0.00547 & 0.0376 & 0.04362 & 0.02442 & 0.02675 & 0.00398 \\ \hline
Y6 &0.00494 & 0.05082 & 0.09003 & 0.05772 & 0.03732 & 0.00452  \\ \hline
Y7 &0.00126 & 0.02106 & 0.0543 & 0.05531 & 0.01999 & 0.00234  \\ \hline
Y8 &0.00071 & 0.00961 & 0.0323 & 0.04043 & 0.0108 & 0.00122 \\ \hline
Y9 &0.00011 & 0.00201 & 0.01204 & 0.02184 & 0.00466 & 0.00052  \\ \hline
Y10 & 0.00006 & 0.00139 & 0.00697 & 0.01582 & 0.00344 & 0.00022 \\ \hline
Col. Sum & 0.03520 & 0.18919 & 0.30666 & 0.26248 & 0.17825 & 0.02826 \\ \hline
\end{tabular}
\caption{Example 3. Risk of Estimators}
\label{Table:three_est_risk_household}
\centering
\begin{tabular}{l|l|c|c|c}
              &             & $n^*=1000$ &  $n^*=2000$ & $n^*=3000$ \\
\hline 
              & pre.app(pre.sim)    &  0.0332(0.0300)  &  0.0332(0.0300)  &  0.0332(0.0300) \\
$n=1000$ & pri.app(pri.sim) &  0.0335(0.0304)  &  0.0323(0.0292)  &  0.0319(0.0287) \\
             &   pool.app(pool.sim)&  0.0321(0.0290)  &  0.0317(0.0286)  &  0.0315(0.0284) \\
\hline
              &  pre.app(pre.sim)   &  0.0157(0.0149)  &  0.0157(0.0149) &  0.0157(0.0149) \\
$n=2000$&   pri.app(pri.sim)   &  0.0170(0.0163)  &  0.0158(0.0150) &  0.0153(0.0146) \\
             & pool.app(pool.sim)   &  0.0153(0.0145)  &   0.0151(0.0143) &  0.0150(0.0142) \\
\hline
              &  pre.app(pre.sim)   &  0.0102(0.0099)  &  0.0102(0.0099)  &  0.0102(0.0099) \\
$n=3000$&   pri.app(pri.sim)   &  0.0120(0.0116)  &  0.0107(0.0104)  &  0.0103(0.0099) \\
             &   pool.app(pool.sim) &  0.0100(0.0097)  &  0.0099(0.0096)   &  0.0098(0.0095) \\
\end{tabular}
\caption{Example 3. R.s.s. of $\widehat{m}^*$ to $\widehat{m}$}
\label{Table:rss_est.pri_est_pre_household}
\centering
\begin{tabular}{l|c|c|c|c|c}
$n_0$ & 1000 & 1500 & 2000 & 2500 & 3000 \\
\hline
r.s.s.app(r.s.s.sim) & 1157(1183) & 1650(1668) & 2146(2145) & 2644(2696) & 3143(3155)\\
\end{tabular}
\caption{Example 3. R.s.s. of $\widehat{m}$ to $\widehat{m}^p$}
\label{Table:rss_est.pre_est_pool_household}
\centering
\begin{tabular}{l|c|c|c|c|c}
$n_0=n_0^*$ & 1000 & 1500 & 2000 & 2500 & 3000 \\
\hline
r.s.s.app(r.s.s.sim) & 1031(1035) & 1552(1562) & 2074(2079) & 2595(2604) & 3117(3129)\\
\end{tabular}
\end{table}
\section{Conclusion}
The golden rule ``the larger sample, the less risk'' holds true under sufficient sample sizes. Actually, we observed the following points theoretically, and numerically for three examples:
\begin{itemize}
\item When $n$ and $n^*$ are large enough, the risk of $\widehat{m}^p$ (which uses the sample of size $n+n^*$ for the estimation of the first-stage model)  is lower than that of $\widehat{m}$ (which uses the sample of size $n$ for the estimation of the first-stage model) or $\widehat{m}^*$ (which uses the sample of size $n^*$ for the estimation of the first-stage model).
\item When $n$ and $n^*$ are large enough and additionally $n^*>n$, then the risk of $\widehat{m}^*$ is lower than that  of $\widehat{m}$.
\end{itemize}
However, blending samples from two surveys has a negative effect in risk reduction.  We notice this from the following theoretical or numerical observations:
\begin{itemize}
\item When $n=n^*$, the risk of $\widehat{m}$ is lower than that of $\widehat{m}^*$. The latter is using two independent sample sets separately for the first and second-stage models, while the former uses the common sample for the two stages. Unified approach $\widehat{m}$ is preferable, because the estimation at the first stage gives an appropriate weight to the each result of the second-stage estimations.
\item If you compare $\widehat{m}$ using the present sample of size $n+n^*$ to $\widehat{m}^p$ (which uses the sample of a total size $n+n^*$), the former performs better with respect to the risk. In view of the r.s.s. of $\widehat{m}$ to $\widehat{m}^p$ under the condition $(n, n^*)=(n_0, n_0)$, it is far less than $2n_0$. 
\item If $n$ is small, we observe a rather pathological phenomenon that increasing $n^*$ causes the higher risk of $\widehat{m}^p$. 
\end{itemize}
The boundary between the area of $(n,n^*)$ where the golden rule holds, and the pathological area, can be estimated using the approximated risks. If the approximated risk of $\widehat{m}$ is smaller than that of $\widehat{m}^p$, that is, \eqref{difference_present_pool} is minus, then it indicates $n$ or $n^*$ is too small and we might be in the pathological area.  Consequently, we propose the following procedure.
\\
1. If the present survey is already finished, substitute $m_{i\cdot}$ in \eqref{difference_present_pool} with the estimate from $\widehat{m}^p_{i\cdot}$ and calculate the value. If the estimated value is plus (minus), use the estimator $\widehat{m}^p$ ($\widehat{m}$). \\
2. If the present survey is still in the planning stage, then substitute $m_{i\cdot}$ in  \eqref{difference_present_pool} with the estimate from $\widehat{m}^*_{i\cdot}$. With $n$ given, if the estimated value of \eqref{difference_present_pool} is minus, it indicates that the $n$ is not large enough for reliable estimation. We should try to increase $n$ (or alternatively decrease the number of the categories within each second-stage model) until the value gets positive.
\\
\\
\\
\\
%
%
%
%
%
\section*{Acknowledgment}
%
%
%
%

%
%
%
%
%
\section*{Appendix}
%
%
%
%
\begin{lemma}
\label{expectations_difference}
Let $X=(X_1,\ldots,X_{p+1})$ be the random vector whose distribution is defined as the multinomial distribution with \eqref{para_multi_dist} and the sample size $n$. The distribution under the condition $X_i \ne 0,  1\leq \forall i \leq p+1$ is considered.  Let the unconditional and conditional expectations of a random variable $Y(X)$ be denoted by $E[Y(X)]$ and $E^*[Y(X)]$, respectively. If 
$$
|Y(X)| \leq  a+b n^c 
$$
holds with some nonnegative numbers $a, b, c$, the difference between the two expectations decreases to zero with exponential speed as $n$ goes to infinity, namely for any $s>0$,
\begin{equation}
\label{diff_cond_uncond}
E^*[Y(X)]=E[Y(X)]+o(n^{-s}).
\end{equation}
In the special case, MLE of $m$, $\widehat{m}(X)$, and the moments of $X_i,\ i=1,\ldots,p+1$, the following  equations hold for any $s>0$.
\begin{align*}
E^*\bigl[\overset{-1}{D} [\widehat{m}(X) : m ]\bigr] &=E \bigl[\overset{-1}{D} [\widehat{m}(X) : m ]\bigr]+o(n^{-s}),\\
E^*[(X_i- n m_i)^k] &=  E[(X_i- n m_i)^k] +o(n^{-s}),\quad k=1,2,\ldots.
\end{align*}
\end{lemma}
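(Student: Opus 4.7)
The plan is to exploit the fact that the probability of the event $A^c \triangleq \{X_i = 0 \text{ for some } i\}$ decays exponentially in $n$, while $|Y(X)|$ grows only polynomially; hence the discrepancy between the conditional and unconditional expectations is negligible at every polynomial order. First I would observe that by the union bound
\[
P(A^c) \leq \sum_{i=1}^{p+1} P(X_i = 0) = \sum_{i=1}^{p+1}(1 - m_i)^n \leq (p+1)\rho^n,
\]
where $\rho \triangleq \max_i (1 - m_i) < 1$ by the positivity of each $m_i$.

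Next, I would write $E^*[Y(X)] = E[Y(X)\mathbf{1}_A]/P(A)$ and manipulate the difference algebraically:
\[
E^*[Y(X)] - E[Y(X)] = \frac{E[Y(X)]\,P(A^c) - E[Y(X)\mathbf{1}_{A^c}]}{1 - P(A^c)}.
\]
The hypothesis $|Y(X)| \leq a + bn^c$ bounds each term in the numerator in absolute value by $(a + bn^c)P(A^c)$, giving
\[
\bigl|E^*[Y(X)] - E[Y(X)]\bigr| \leq \frac{2(a + bn^c)P(A^c)}{1 - P(A^c)} = O(n^c \rho^n),
\]
which is $o(n^{-s})$ for every $s > 0$. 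This proves \eqref{diff_cond_uncond}.

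For the two concrete instances it remains only to verify the polynomial-growth hypothesis. For $Y(X) = (X_i - nm_i)^k$ the crude bound $|X_i - nm_i| \leq n$ yields $|Y(X)| \leq n^k$, so $c = k$ works. For $Y(X) = \overset{-1}{D}[\widehat{m}(X):m] = \sum_i (X_i/n)\log((X_i/n)/m_i)$ with the convention $0\log 0 = 0$, the real function $x \mapsto x\log(x/m_i)$ on $[0,1]$ attains its minimum $-m_i/e$ at $x = m_i/e$ and its maximum $-\log m_i$ at $x = 1$; summing over $i$ produces a constant upper bound on the divergence independent of $n$, so $c = 0$ suffices.

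The only delicate point I anticipate is verifying this boundedness of the divergence, which reduces to the elementary fact that $x\log x$ is bounded on $[0,1]$, together with the correct handling of the $0\log 0 = 0$ convention at coordinates where $X_i = 0$. Once that is in place, the exponential decay of $P(A^c)$ does all the work and no careful moment calculation is required.
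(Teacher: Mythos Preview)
Your proof is correct and follows essentially the same approach as the paper: both show that $P(\exists i:\,X_i=0)$ decays faster than any polynomial via the union bound, decompose $E^*[Y]-E[Y]$ into a piece on the good event times $m^*/(1-m^*)$ plus a piece on the bad event, and use the polynomial bound on $|Y|$ to conclude. Your presentation is slightly cleaner (explicit rate $\rho^n$ and indicator-function notation rather than sums over $\mathcal{X}$ and $\mathcal{X}^*$), but the argument is the same.
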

\begin{proof}
For $i (=1,\ldots,p+1)$ and $s (>0)$, because the following equivalence relation holds
\begin{align*}
&n^s (1-m_i)^n \to 0 \\
&\iff s\log n +n \log(1-m_i) \to -\infty \iff n\Bigl(s\frac{\log n}{n}+\log(1-m_i)\Bigr) \to -\infty,
\end{align*}
we have
\begin{equation}
P(X_i=0)=(1-m_i)^n=o(n^{-s}).
\end{equation}
Let $Z_+^{p+1}$ be the set of all $p+1$-dimensional vectors whose elements are nonnegative integers.
\begin{align*}
\mathcal{X}&\triangleq \Bigl\{x=(x_1,\ldots,x_{p+1}) \in Z_+^{p+1} \Big| \sum_{j=1}^{p+1} x_j =n,\ x_i >0,\ 1\leq \forall i \leq p+1\Bigr\}\\
\mathcal{X}^*&\triangleq \Bigl\{x=(x_1,\ldots,x_{p+1}) \in Z_+^{p+1} \Big| \sum_{j=1}^{p+1} x_j =n, x_i=0,\ 1 \leq \exists i \leq p+1\Bigr\}
\end{align*}
Notice that
\[
m^*\triangleq P(X_i=0,\ 1\leq \exists i \leq p+1) \leq \sum_{i=1}^{p+1} P(X_i=0),
\]
which means $m^*=o(n^{-s})$. Because
\begin{align*}
E^*[Y(X)]&=\sum_{x \in \mathcal{X}} Y(x) P(X=x)/(1-m^*) \\
E[Y(X)]&=\sum_{x \in \mathcal{X}} Y(x) P(X=x) +\sum_{x \in \mathcal{X}^*} Y(x) P(X=x),
\end{align*}
we have
\[
E^*[Y(X)]-E[Y(X)]=\sum_{x \in \mathcal{X}} Y(x) P(X=x)\frac{m^*}{1-m^*}-\sum_{x \in \mathcal{X}^*} Y(x) P(X=x).
\]
Choose arbitrary $s(>0)$. Because $\mathcal{X}$ and $\mathcal{X}^*$ are both finite sets, for some nonnegative constants $a', a'', b', b''$, 
\begin{align*}
n^s\Bigl|\sum_{x \in \mathcal{X}^*} Y(x) P(X=x)\frac{m^*}{1-m^*} \Bigr|
&\leq n^s \sum_{x \in \mathcal{X}^*} (a+b n^c) \frac{m^*}{1-m^*}\\
&\leq a' \frac{n^s m^*}{1-m^*} + b' \frac{n^{s+c} m^*}{1-m^*}\\
n^s\Bigl|\sum_{x \in \mathcal{X}^*} Y(x) P(X=x) \Bigr|\leq n^s\sum_{x \in \mathcal{X}^*} (a+b n^c) m^*
&\leq a''n^s m^*+b'' (n^{s+c} m^*).
\end{align*}
Because of $m^*=o(n^{-t})$ for any $t(>0)$, 
\[
\frac{n^sm^*}{1-m^*}, \frac{n^{s+c} m^*}{1-m^*}, n^s m^*, n^{s+c} m^* \to 0
\]
as $n\to \infty$, which shows \eqref{diff_cond_uncond}. The rest is obvious from the fact 
$\overset{-1}{D}[\widehat{m}(X) : m ]$ is bounded (notice that $x\log x \to 0$ as $x \to 0$) and
\[
|(X_i - n m_i)^k| \leq \max (m_i^k, (1-m_i)^k) n^k,\quad k=1,2,\ldots.
\]
\end{proof}
%
%
%
%
%
\begin{lemma}
\label{converge->p_converge}
Suppose that $X_n,\ n=1,2,\ldots$ converges in probability to some positive constant as $n \to \infty$. 
If $F(x)=O(1)$ as $x \to \infty$, then $F(nX_n)=O_p(1)$. If $F(x)=o(1)$ as $x \to \infty$, then $F(nX_n)=o_p(1)$.
\end{lemma}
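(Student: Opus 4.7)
The plan is to reduce both assertions to the single preliminary fact that $nX_n$ tends to infinity in probability, in the sense that $P(nX_n \leq K) \to 0$ for every fixed $K > 0$. Let $c > 0$ denote the probability limit of $X_n$. Applying the definition of convergence in probability with $\varepsilon = c/2$ yields $P(X_n \geq c/2) \to 1$, and hence $P(nX_n \geq nc/2) \to 1$. Since $nc/2 \to \infty$, for every fixed $K > 0$ we have $nc/2 \geq K$ for all $n$ sufficiently large, which gives $P(nX_n \geq K) \to 1$, equivalently $P(nX_n < K) \to 0$.

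Suppose first that $F(x) = O(1)$ as $x \to \infty$, so there exist constants $M > 0$ and $x_0$ with $|F(x)| \leq M$ whenever $x \geq x_0$. Then for each $n$,
$$P(|F(nX_n)| > M) \leq P(nX_n < x_0),$$
because on the complementary event $\{nX_n \geq x_0\}$ we have $|F(nX_n)| \leq M$. By the preliminary step, the right-hand side tends to $0$, so for every $\varepsilon > 0$ the choice $K = M$ satisfies $P(|F(nX_n)| > K) < \varepsilon$ for all sufficiently large $n$; enlarging $K$ to absorb the finitely many initial indices gives the uniform-in-$n$ version of $O_p(1)$ if desired.

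For the second assertion, suppose $F(x) = o(1)$ as $x \to \infty$. Given any $\varepsilon > 0$, choose $x_0(\varepsilon)$ such that $|F(x)| < \varepsilon$ whenever $x \geq x_0(\varepsilon)$. Then
$$P(|F(nX_n)| > \varepsilon) \leq P(nX_n < x_0(\varepsilon)) \to 0$$
by applying the preliminary step with $K = x_0(\varepsilon)$, and this is precisely $F(nX_n) = o_p(1)$. The argument is entirely elementary; the only step requiring any thought is the push of $nX_n$ to infinity in probability, which rests on the positivity of the limiting constant $c$, and once that is in place both statements follow directly from the definitions of $O$ and $o$ at infinity.
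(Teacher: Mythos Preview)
Your proof is correct and follows essentially the same approach as the paper's: both arguments exploit the positivity of the limit to push $nX_n$ past any fixed threshold with probability tending to one, and then read off the conclusion from the definition of $O(1)$ or $o(1)$ at infinity. Your version is organized slightly more cleanly by isolating the preliminary fact $P(nX_n < K)\to 0$ once and applying it twice, whereas the paper interleaves this step with each case, but the substance is identical.
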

\begin{proof}
The first statement is proved as follows. If $F(x)=O(1)$, there exists some $M(>0)$ and $x_0$ such that
\begin{equation}
\label{bounded_F}
|F(x)| < M,\quad\forall x > x_0.
\end{equation}
Let $b >0 $ denote the point to which $X_n$ converges in probability. Choose an arbitrary $\epsilon (>0)$ and $\tau (b > \tau >0)$, then because of the convergence of $X_n$, we have $n_0$ such that
\[
P(b-\tau < X_n < b+\tau ) > 1-\epsilon, \quad \forall n > n_0,
\]
which means
\[
P(n(b-\tau) < nX_n) > 1-\epsilon, \quad \forall n > n_0.
\]
Let $N=\max\bigl(x_0/(b-\tau),n_0\bigr)$. Note that if $n > N$, then $n(b-\tau)>x_0$ and $n >n_0$. Therefore,  
\begin{equation}
\label{x_0 < nX_n}
P(x_0 < nX_n) > 1-\epsilon, \quad \forall n > N.
\end{equation}
Combining \eqref{bounded_F} and \eqref{x_0 < nX_n}, we have
\[
P(|F(nX_n)| < M) > 1-\epsilon, \quad  \forall n > N.
\]
Now we prove the second statement. Choose an arbitrary $\epsilon(>0)$. If $F(x)=o(1)$, there exists $x_0$ such that
\begin{equation}
\label{vanish_F}
|F(x)| < \epsilon,\quad\forall x > x_0. 
\end{equation}
Combining  \eqref{x_0 < nX_n} and \eqref{vanish_F}, we have
\[
P(|F(nX_n)| < \epsilon) > 1-\epsilon, \quad  \forall n > N.
\]
\end{proof}
\end{document}